  \def\KK{{\mathbb K}}
  \def\NN{{\mathbb N}}
  \def\RR{{\mathbb R}}
  \def\fF{{\mathcal F}}
  \def\inte{{\rm int}}
  \def\Inte{{\rm Int}}
  \def\link{{\rm lk}}
  \def\KR{{\rm K}}
  \def\LR{{\rm L}}
  \def\min{{\rm min}}
  \def\vert{{\rm vert}}
  \def\sm{\smallsetminus}
  \theoremstyle{plain}
    \newtheorem{theorem}{Theorem}[section]
    \newtheorem{proposition}[theorem]{Proposition}
    \newtheorem{corollary}[theorem]{Corollary}
  \theoremstyle{definition}
    \newtheorem{definition}[theorem]{Definition}
    \newtheorem{example}[theorem]{Example}
    \newtheorem{question}[theorem]{Question}
    \newtheorem{remark}[theorem]{Remark}
  \numberwithin{equation}{section}
\begin{document}

\title[Cubical subdivisions and local $h$-vectors]{Cubical subdivisions
and local $h$-vectors}

  \author{Christos~A.~Athanasiadis}
  \address{Department of Mathematics
          (Division of Algebra-Geometry) \\
          University of Athens \\
          Panepistimioupolis, Athens 15784 \\
          Hellas (Greece)}
  \email{caath@math.uoa.gr}

\date{August 25, 2010; Revised, January 28, 2011}
\thanks{2000 \textit{Mathematics Subject Classification.} Primary 52B05; \, Secondary 
05E45, 06A07, 55U10.}
\keywords{Cubical complex, cubical subdivision, face enumeration, cubical $h$-vector, 
cubical local $h$-vector, locally Eulerian poset, formal subdivision}

  \begin{abstract}
    Face numbers of triangulations of simplicial complexes were studied by Stanley
    by use of his concept of a local $h$-vector. It is shown that a parallel theory
    exists for cubical subdivisions of cubical complexes, in which the role of the
    $h$-vector of a simplicial complex is played by the (short or long) cubical
    $h$-vector of a cubical complex, defined by Adin, and the role of the local
    $h$-vector of a triangulation of a simplex is played by the (short or long)
    cubical local $h$-vector of a cubical subdivision of a cube. The cubical local
    $h$-vectors are defined in this paper and are shown to share many of the
    properties of their simplicial counterparts. Generalizations to subdivisions of
    locally Eulerian posets are also discussed.
  \end{abstract}

  \maketitle

  \section{Introduction}
  \label{sec:intro}

  Simplicial subdivisions (or triangulations) of simplicial complexes were studied
  from an enumerative point of view by Stanley \cite{Sta92}. Specifically, the paper
  \cite{Sta92} is concerned with the way in which the face enumeration of a simplicial
  complex $\Delta$, presented in the form of the $h$-vector (equivalently, of the
  $h$-polynomial) of $\Delta$, changes under simplicial subdivisions of various types;
  see \cite[Chapter II]{StaCCA} for the importance of $h$-vectors in the combinatorics
  of simplicial complexes.

  A key result in this study is a formula \cite[Equation (2)]{Sta92} which expresses
  the $h$-polynomial of a simplicial subdivision $\Delta'$ of a pure simplicial
  complex $\Delta$ as the sum of the $h$-polynomial of $\Delta$ and other terms, one
  for each nonempty face $F$ of $\Delta$. The term corresponding to $F$ is a product
  of two polynomials, one of which depends only on the local combinatorics of
  $\Delta$ at $F$ and the other only on the restriction of $\Delta'$ to $F$. The
  latter is determined by the local $h$-vector of $\Delta'$ at $F$, a concept which
  is introduced and studied in \cite{Sta92}. It is shown that the local $h$-vector
  of a simplicial subdivision of a simplex is symmetric and that for quasi-geometric
  subdivisions (a kind of topological subdivision which includes all geometric
  subdivisions) it is nonnegative. From these results it is deduced that the $h$-vector
  of a Cohen-Macaulay simplicial complex increases under quasi-geometric subdivision.
  Other properties of local $h$-vectors are also established in \cite{Sta92} and a
  generalization of the theory to formal subdivisions of lower Eulerian posets is
  developed.

  This paper investigates similar questions for cubical subdivisions of cubical
  complexes. A well-behaved analogue of the $h$-polynomial was introduced for cubical
  complexes (in two forms, short and long) by Adin \cite{Ad96}. For both types,
  it is shown that formulas analogous to that of \cite{Sta92} hold for the cubical
  $h$-polynomial of a cubical subdivision of a pure cubical complex (Theorems
  \ref{thm:local} and \ref{thm:clocal}), when one suitably defines the (short or
  long) cubical local $h$-vector of a cubical subdivision of a cube (Definitions
  \ref{def:short} and \ref{def:long}). The cubical local $h$-vectors are shown
  to have symmetric coefficients (Theorem \ref{thm:symm} and Corollary
  \ref{cor:symm}). For the short type, they are shown to have nonnegative
  coefficients for a class of subdivisions which are called locally
  quasi-geometric in this paper and which include all geometric cubical
  subdivisions. A monotonicity property, analogous to that of \cite{Sta92} in
  the simplicial case, is deduced for the short cubical $h$-vectors of locally
  quasi-geometric cubical subdivisions (Corollary \ref{cor:monotone}). At present,
  nonnegativity of cubical local $h$-vectors and monotonicity of cubical
  $h$-vectors for (say, geometric) cubical subdivisions seem to be out of reach
  for the long type in high dimensions.

  The theory of short cubical local $h$-vectors is generalized to formal
  subdivisions of locally Eulerian posets in Section \ref{sec:gen} by suitably
  modifying the approach of \cite[Part II]{Sta92}. This includes other kinds of
  $h$-vectors for simplicial subdivisions, such as the short simplicial $h$-vector
  of Hersh and Novik \cite{HN02} (see Examples \ref{ex:cchgen} and \ref{ex:newsub}),
  as well as cubical subdivisions of complexes more general than regular
  CW-complexes (see Section \ref{subsec:CW}), to which the theory of \cite{Sta92}
  can be extended.

  This paper is organized as follows. Preliminaries on (simplicial and) cubical
  complexes are given in Section \ref{sec:back}. Short and long cubical local
  $h$-vectors are defined in Sections \ref{sec:short} and \ref{sec:long},
  respectively, where several examples and elementary properties also appear.
  The main properties of short cubical local $h$-vectors are stated and proven in
  Sections \ref{sec:main} and \ref{sec:nonn}. Section \ref{sec:long} extends some
  of these properties to the long cubical local $h$-vector by observing that the two
  types of cubical local $h$-vectors are related in a simple way. Section
  \ref{sec:gen} is devoted to generalizations of the theory of short cubical local
  $h$-vectors to subdivisions of locally Eulerian posets. The methods used in this
  paper are similar to those of \cite{Sta92}, but extensions and variations are
  occasionally needed.

  \section{Face enumeration and subdivisions of cubical complexes}
  \label{sec:back}

  This section reviews background material on the face enumeration and subdivisions
  of simplicial and cubical complexes. We refer the reader to \cite{StaCCA} for any
  undefined terminology and for more information on the algebraic, enumerative and
  homological properties of simplicial complexes. Basic background on partially
  ordered sets and convex polytopes can be found in \cite[Chapter 3]{StaEC1} and
  \cite{Gru67, Zie95}, respectively. We denote by $|S|$ the cardinality of a finite
  set $S$.

  \subsection{Simplicial complexes} \label{subsec:sim}
  Let $\Delta$ be a (finite, abstract) simplicial complex of dimension $d-1$. The
  fundamental enumerative invariant of $\Delta$ for us will be the $h$-polynomial,
  defined by
    \begin{equation} \label{eq:hsimplicial}
      h(\Delta, x) \ = \ \sum_{F \in \Delta} \ x^{|F|} (1-x)^{d-|F|}.
    \end{equation}
  This polynomial has nonnegative coefficients if $\Delta$ is Cohen-Macaulay over
  some field. If $\Delta$ is a homology ball over some field, we let $\inte(\Delta)$ 
  denote the set of (necessarily nonempty) faces of $\Delta$ which are not contained 
  in the boundary $\partial \Delta$ of $\Delta$ (observe that $\inte(\Delta)$ is 
  \emph{not} a subcomplex of $\Delta$; for instance, if $\Delta$ is the simplex $2^V$
  on the vertex set $V$, then $\inte(\Delta) = \{V\}$). If $\Delta$ is a homology 
  sphere, we set $\inte(\Delta) := \Delta$ (for a discussion of homology balls and 
  spheres see, for instance, \cite[Section 2]{EK07} or \cite[Section 4]{Swa06}). In 
  either case, we set
    \begin{equation} \label{eq:hinte}
      h(\inte(\Delta), x) \ = \ \sum_{F \in \inte(\Delta)} \ x^{|F|}
      (1-x)^{d-|F|}.
    \end{equation}
  The next proposition follows from Theorem 7.1 in \cite[Chapter II]{StaCCA} (see
  also \cite[Theorem 2]{MW71}, \cite[Lemma 6.2]{Sta87} and \cite[Lemma 2.3]{Sta93}).

  \begin{proposition} \label{prop:sirec}
    Let $\Delta$ be a $(d-1)$-dimensional simplicial complex. If $\Delta$ is either
    a homology ball or a homology sphere over some field, then
      \begin{equation} \label{eq:propsirec}
        x^d \, h (\Delta, 1/x) \ = \ h(\inte(\Delta), x).
      \end{equation}
  \end{proposition}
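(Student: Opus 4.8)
The plan is to reduce \eqref{eq:propsirec} to a single clean identity for the face-counting polynomial and then to evaluate that identity at $-1$ over the links of $\Delta$, where the Euler--Poincar\'e formula does the work. Set $f(\Gamma,t):=\sum_{F\in\Gamma}t^{|F|}$ for any finite collection $\Gamma$ of faces. From \eqref{eq:hsimplicial} and \eqref{eq:hinte} one has, as identities of polynomials, $h(\Delta,x)=(1-x)^d\,f(\Delta,x/(1-x))$ and $h(\inte(\Delta),x)=(1-x)^d\,f(\inte(\Delta),x/(1-x))$, and consequently $x^d\,h(\Delta,1/x)=(x-1)^d\,f(\Delta,1/(x-1))$. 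Substituting $t=x/(1-x)$, so that $-1-t=1/(x-1)$, one checks that \eqref{eq:propsirec} is equivalent to
  \begin{equation} \label{eq:frecip}
    f(\inte(\Delta),t) \ = \ (-1)^d\,f(\Delta,-1-t),
  \end{equation}
and it suffices to prove \eqref{eq:frecip}.

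To prove \eqref{eq:frecip}, I would expand $f(\Delta,-1-t)=\sum_{F\in\Delta}(-1)^{|F|}(1+t)^{|F|}$, use $(1+t)^{|F|}=\sum_{G\subseteq F}t^{|G|}$, and interchange the order of summation over the pairs $G\subseteq F$ in $\Delta$:
  \[
    f(\Delta,-1-t) \ = \ \sum_{G\in\Delta} t^{|G|} \sum_{\substack{F\in\Delta \\ F\supseteq G}} (-1)^{|F|}.
  \]
Writing each $F\supseteq G$ uniquely as a disjoint union $F=G\sqcup H$ with $H\in\link_\Delta(G)$, the inner sum equals $(-1)^{|G|}\sum_{H\in\link_\Delta(G)}(-1)^{|H|}$, and the Euler--Poincar\'e relation $\sum_{H\in\Gamma}(-1)^{|H|}=-\widetilde{\chi}(\Gamma)$, valid for every simplicial complex $\Gamma$ (with $\widetilde{\chi}(\{\emptyset\})=-1$), turns this into
  \[
    f(\Delta,-1-t) \ = \ -\sum_{G\in\Delta}(-1)^{|G|}\,\widetilde{\chi}(\link_\Delta(G))\,t^{|G|}.
  \]

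The hypothesis on $\Delta$ is used only to evaluate these Euler characteristics. If $\Delta$ is a homology ball or sphere of dimension $d-1$, then every link $\link_\Delta(G)$ is again a homology ball or sphere, of dimension $d-1-|G|$, and it is a homology sphere precisely when $G\in\inte(\Delta)$ and a homology ball otherwise; this is built into the definition of $\partial(\Delta)$ together with the convention $\inte(\Delta)=\Delta$ in the spherical case (see \cite[Section 2]{EK07} or \cite[Section 4]{Swa06}). Since a homology $k$-ball is acyclic and a homology $k$-sphere has reduced Euler characteristic $(-1)^k$, this gives $\widetilde{\chi}(\link_\Delta(G))=(-1)^{d-1-|G|}$ for $G\in\inte(\Delta)$ and $\widetilde{\chi}(\link_\Delta(G))=0$ for $G\in\Delta\sm\inte(\Delta)$. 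Substituting, the terms with $G\notin\inte(\Delta)$ drop out, while $(-1)^{|G|}(-1)^{d-1-|G|}=(-1)^{d-1}$ in the remaining ones, so $f(\Delta,-1-t)=-(-1)^{d-1}\sum_{G\in\inte(\Delta)}t^{|G|}=(-1)^d f(\inte(\Delta),t)$, which is \eqref{eq:frecip}.

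I do not foresee a serious obstacle: once \eqref{eq:frecip} is isolated the computation is short, and the only point requiring care is the uniform bookkeeping of the empty face and of the two cases. For a homology sphere, $\inte(\Delta)=\Delta$ and \eqref{eq:frecip} is the Dehn--Sommerville symmetry $h_i=h_{d-i}$; for a homology ball, the faces outside $\inte(\Delta)$ are precisely the ones on which $\widetilde{\chi}(\link_\Delta(G))$ vanishes, so they contribute nothing. The single genuine ingredient is the homological structure of balls, spheres and their links recalled above, which is standard.
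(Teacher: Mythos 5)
Your proof is correct, and it is essentially the standard argument behind the references the paper cites for this statement (the paper itself gives no proof of Proposition \ref{prop:sirec}, deferring to \cite[Theorem II.7.1]{StaCCA}, \cite{MW71}, \cite{Sta87}): reducing to a reciprocity for the face polynomial and evaluating $\sum_{F\supseteq G}(-1)^{|F|}$ via the reduced Euler characteristics of links, which vanish for boundary faces and equal $(-1)^{d-1-|G|}$ for interior faces. The same M\"obius/Euler-characteristic computation reappears, in generalized form, in the paper's proof of Proposition \ref{prop:CWformal}.
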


  \subsection{Cubical complexes} \label{subsec:cub}
  An (abstract) $d$-dimensional cube for us will be any poset which is isomorphic to
  the poset of faces of the standard $d$-dimensional cube $[0, 1]^d \subseteq \RR^d$.
  A (finite, abstract) \emph{cubical complex} is a finite poset $\KR$ with a minimum
  element, denoted by $\varnothing$ and refered to as the empty face, having the
  following properties: (i) the interval $[\varnothing, F]$ in $\KR$ is an abstract
  cube for every $F \in \KR$; and (ii) $\KR$ is a meet-semilattice (meaning that any
  two elements of $\KR$ have a greatest lower bound). The elements of $\KR$ are called 
  \emph{faces} and have well-defined dimensions. We will refer to the meet (greatest 
  lower bound) of two faces of $\KR$ as their \emph{intersection} and will say that 
  a face $F$ of $\KR$ is \emph{contained} in a face $G$ if $F \le G$ holds in $\KR$.
  We will refer to a nonempty  order ideal of $\KR$ as a \emph{subcomplex}. The
  complex $\KR$ is \emph{pure} if all its maximal elements have the same dimension.
  Throughout this paper we will denote by $\fF(\KR)$ the subposet of nonempty faces
  of $\KR$. Any algebraic or topological properties of $\KR$ we consider will refer
  to those of the simplicial complex of chains (order complex) of $\fF(\KR)$; see
  \cite{Bj95}.

  A geometric cube is any polytope which is combinatorially equivalent to a standard
  cube. A (finite) \emph{geometric cubical complex} is a is a finite collection $K$
  of geometric cubes in some space $\RR^N$, called faces, which has the properties:
  (i) every face of an element of $K$ also belongs to $K$; and (ii) the intersection
  of any two elements of $K$ is a face of both. The set of faces of any geometric
  cubical complex, ordered by inclusion, is an (abstract) cubical complex but the
  converse is not always true; see \cite[Section 1]{Het95}. However, every cubical 
  complex $\KR$ is isomorphic to the poset of faces of a regular CW-complex (forced 
  to have the intersection property), and that complex is determined from $\KR$ up 
  to homeomorphism; see \cite{Bj84} \cite[Section 4.7]{BSZ99} and Section 
  \ref{subsec:CW}.

  The short cubical $h$-vector of a cubical complex $\KR$ is defined in \cite{Ad96}
  as
    \begin{equation} \label{eq:defsc}
      h^{(sc)} (\KR, x) \ = \ \sum_{F \in \KR \sm \{\varnothing\}} \ (2x)^{\dim(F)}
      (1-x)^{d-\dim(F)},
    \end{equation}
  where $d = \dim (\KR)$ is the maximum dimension of a face of $\KR$. The link
  $\link_\KR (F)$ of any nonempty face $F$ of $\KR$ is naturally an abstract
  simplicial complex and we have the fundamental observation \cite[Theorem
  9]{Ad96} (due to G.~Hetyei) that if $\KR$ is pure, then
    \begin{equation} \label{eq:lemsc}
      h^{(sc)} (\KR, x) \ = \ \sum_{v \in \vert(\KR)} \ h (\link_\KR (v), x),
    \end{equation}
  where $\vert(\KR)$ denotes the set of vertices (zero-dimensional faces) of $\KR$.

  Given a cubical complex $\KR$ of dimension $d$ and a cubical complex $\KR'$ of
  dimension $d'$, the direct product $\fF(\KR) \times \fF(\KR')$ is isomorphic to
  the poset $\fF(\KR \times \KR')$ of nonempty faces of a cubical complex $\KR
  \times \KR'$ of dimension $d+d'$ and we have $\dim (F \times F') = \dim(F) +
  \dim(F')$ for all $F \in \fF(\KR)$ and $F' \in \fF(\KR')$. Hence, equation
  (\ref{eq:defsc}) yields
    \begin{equation} \label{eq:prodsc}
      h^{(sc)} (\KR \times \KR', x) \ = \ h^{(sc)} (\KR, x) \, h^{(sc)} (\KR', x).
    \end{equation}

  By analogy with (\ref{eq:hinte}), we define
    \begin{equation} \label{eq:schinte}
      h^{(sc)} (\inte(\KR), x) \ = \ \sum_{F \in \inte(\KR)} \ (2x)^{\dim(F)}
      (1-x)^{d-\dim(F)}
    \end{equation}
  for every cubical complex $\KR$ which is homeomorphic to a $d$-dimensional
  manifold with nonempty boundary $\partial \KR$, where $\inte(\KR) = \KR \sm
  \partial \KR$. The next statement has appeared in \cite{Kle09}. We include
  a proof, so that it can be easily adapted in other situations (see Example
  \ref{ex:newsub}). The statement (and proof) is valid for manifolds without
  boundary as well (see part (i) of \cite[Theorem 5]{Ad96}), provided one
  sets $\inte(\KR) = \KR$. The corresponding statement for the long cubical
  $h$-vector (defined in the sequel) of a cubical ball appeared in 
  \cite[Proposition 4.1]{BBC97}.

  \begin{corollary} \label{cor:screc} {\rm (\cite[Proposition 4.4]{Kle09})}
    For every cubical complex $\KR$ which is homeomorphic to a $d$-dimensional
    manifold with nonempty boundary we have
      \begin{equation} \label{eq:propscrec}
        x^d \, h^{(sc)} (\KR, 1/x) \ = \ h^{(sc)} (\inte(\KR), x).
      \end{equation}
  \end{corollary}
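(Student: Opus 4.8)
The plan is to mimic the proof of (\ref{eq:lemsc}): reduce the statement, by summing over the vertices of $\KR$, to the simplicial identity of Proposition~\ref{prop:sirec} applied to the vertex links of $\KR$.

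First I would record the topological input, which is the only place the manifold hypothesis enters. Since $|\KR|$ is a $d$-manifold with nonempty boundary, $\KR$ is pure of dimension $d$, the boundary subcomplex $\partial \KR$ is well defined with $\inte(\KR) = \KR \sm \partial \KR$, and for each vertex $v$ of $\KR$ the link $\link_\KR(v)$ is a $(d-1)$-dimensional simplicial complex which is a homology $(d-1)$-sphere if $v \notin \partial \KR$ and a homology $(d-1)$-ball if $v \in \partial \KR$; moreover, in the latter case $\partial(\link_\KR(v)) = \link_{\partial \KR}(v)$.

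Next I would prove the interior analogue of (\ref{eq:lemsc}), namely
$$h^{(sc)} (\inte(\KR), x) \ = \ \sum_{v \in \vert(\KR)} \ h (\inte(\link_\KR (v)), x).$$
Exactly as in the proof of (\ref{eq:lemsc}), for a fixed vertex $v$ the faces $G$ of $\link_\KR(v)$ are in bijection with the faces $F$ of $\KR$ with $v \le F$, with $|G| = \dim(F)$, and each $k$-dimensional face of $\KR$ has precisely $2^k$ vertices. The additional point is that, under this bijection, $G \in \inte(\link_\KR(v))$ holds if and only if $F \in \inte(\KR)$: when $v \notin \partial \KR$ this is because $\link_\KR(v)$ is a homology sphere (so $\inte(\link_\KR(v)) = \link_\KR(v)$) and no face of $\KR$ containing $v$ can lie in the order ideal $\partial \KR$; when $v \in \partial \KR$ it follows from $\partial(\link_\KR(v)) = \link_{\partial \KR}(v)$, since then $G$ lies in $\partial(\link_\KR(v))$ exactly when $F$ lies in $\partial \KR$. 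Expanding the right-hand side with the aid of (\ref{eq:hsimplicial}) (using $\dim \link_\KR(v) = d-1$) and collecting, for each $F \in \inte(\KR)$, the $2^{\dim F}$ identical contributions coming from the vertices of $F$, one obtains the displayed identity by (\ref{eq:schinte}).

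Finally I would apply Proposition~\ref{prop:sirec} to each $(d-1)$-dimensional homology ball or sphere $\link_\KR(v)$, which gives $x^d \, h(\link_\KR(v), 1/x) = h(\inte(\link_\KR(v)), x)$, and then sum over all $v \in \vert(\KR)$: by (\ref{eq:lemsc}) the left-hand side sums to $x^d \, h^{(sc)}(\KR, 1/x)$, and by the identity just established the right-hand side sums to $h^{(sc)}(\inte(\KR), x)$. I expect the main obstacle to be the second paragraph above, that is, identifying which faces of the vertex links correspond to interior faces of $\KR$ — in particular the compatibility $\partial(\link_\KR(v)) = \link_{\partial \KR}(v)$ for boundary vertices. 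Once this is in place, the remainder is the same bookkeeping that underlies (\ref{eq:lemsc}), now carried out termwise against Proposition~\ref{prop:sirec}.
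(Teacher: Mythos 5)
Your proposal is correct and follows essentially the same route as the paper: decompose via vertex links using (\ref{eq:lemsc}), apply Proposition \ref{prop:sirec} to each link (a homology $(d-1)$-ball or sphere), and regroup the interior link faces by their carrying faces of $\KR$ — your explicit compatibility $G \in \inte(\link_\KR(v)) \Leftrightarrow F \in \inte(\KR)$ is exactly the paper's observation that $n(F)$ equals $2^{\dim(F)}$ for interior $F$ and $0$ otherwise.
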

  \begin{proof}
    We observe that for every vertex $v$ of $\KR$, the simplicial complex
    $\link_\KR (v)$ is either a homology sphere or a homology ball,
    depending on whether $v$ lies in $\inte(\KR)$ or not, of dimension $d-1$.
    Thus, using (\ref{eq:lemsc}) and Proposition \ref{prop:sirec}, we compute that

      \begin{eqnarray*}
      x^d \, h^{(sc)} (\KR, 1/x) &=& \sum_{v \in \vert(\KR)} \ x^d \,
      h (\link_\KR (v), 1/x) \ =
      \sum_{v \in \vert(\KR)} \ h (\inte(\link_\KR (v)), x) \\
      & & \\
      &=& \sum_{v \in \vert(\KR)} \ \sum_{E \in \inte(\link_\KR (v))}
      x^{|E|} (1-x)^{d-|E|} \\
      & & \\
      &=& \sum_{F \in \KR} \ n(F) \, x^{\dim(F)} (1-x)^{d-\dim(F)},
      \end{eqnarray*}
    where $n(F)$ denotes the number of vertices $v$ of $F$ for which the
    face of $\link_\KR (v)$ which corresponds to $F$ is an interior face
    of $\link_\KR (v)$. Equation (\ref{eq:propscrec}) follows by noting
    that
      $$ n(F) \ = \ \begin{cases}
         2^{\dim(F)}, & \text{if \ $F \in \inte(\KR)$} \\
         0, & \text{otherwise} \end{cases} $$
    for $F \in \KR$.
  \end{proof}

  The (long) cubical $h$-polynomial of a $d$-dimensional cubical complex $\KR$
  may be defined \cite{Ad96} by the equation
    \begin{equation} \label{eq:defc}
      (x+1) h^{(c)} (\KR, x) \ = \ 2^d + x h^{(sc)} (\KR, x) + (-2)^d \,
      \widetilde{\chi} (\KR) x^{d+2},
    \end{equation}
  where
    $$ \widetilde{\chi} (\KR) \ = \ \sum_{F \in \KR} (-1)^{\dim(F)} $$
  is the reduced Euler characteristic of $\KR$. Since $h^{(sc)} (\KR, -1) = 2^d
  + 2^d \, \widetilde{\chi} (\KR)$ \cite[Lemma 1 (ii)]{Ad96}, the function
  $h^{(c)} (\KR, x)$ is indeed a polynomial in $x$ of degree at most $d+1$. For
  use in Section \ref{sec:long}, we note (see also \cite[Lemma 2]{Ad96}) that
    \begin{equation} \label{eq:h0}
      h^{(c)}_0 (\KR) \ = \ 2^d
    \end{equation}
  and
    \begin{equation} \label{eq:hd+1}
      h^{(c)}_{d+1} (\KR) \ = \ (-2)^d \, \widetilde{\chi} (\KR),
    \end{equation}
  where $h^{(c)} (\KR, x) = \sum_{i=0}^{d+1} h^{(c)}_i (\KR) x^i$.

  \subsection{Cubical subdivisions} \label{subsec:cubsub}
  The notion of cubical subdivision of a cubical complex we will adopt is
  analogous to that of a simplicial subdivision of an abstract simplicial complex,
  introduced in \cite[Section 2]{Sta92}.

  A (finite, topological) \emph{cubical subdivision} of a cubical complex $\KR$
  is a cubical complex $\KR'$, together with a map $\sigma: \fF(\KR') \to \fF(\KR)$
  (the associated subdivision map), having the following properties:
    \begin{itemize}
      \item[(a)] 
      For every $F \in \fF(\KR)$, the set $\KR'_F := \sigma^{-1} (\fF(F)) \cup
      \{\varnothing\}$ is a subcomplex of $\KR'$ which is homeomorphic to a ball of
      dimension $\dim(F)$. 
      \item[(b)] 
      $\sigma^{-1} (F)$ consists of the interior faces of the ball $\KR'_F$. 
    \end{itemize}
  The subcomplex $\KR'_F$ is called the \emph{restriction} of $\KR'$ to $F$. For $G 
  \in \fF(\KR')$, the face $\sigma(G)$ of $\KR$ is called the \emph{carrier} of $G$. 
  It follows from the defining properties that $\sigma$ is surjective and that $\dim(G) 
  \le \dim (\sigma(G))$ for every $G \in \fF(\KR')$. Several examples of these concepts 
  appear in the following sections.

  Let $C$ be a geometric cube. A cubical subdivision $\Gamma$ of $C$ is said to be
  \emph{geometric} if it can be realized by a geometric cubical complex $\Gamma'$
  which subdivides $C$ (meaning, the union of faces of $\Gamma'$ is equal to $C$).
  In that case, the carrier of $G \in \Gamma' \sm\{\varnothing\}$ is the smallest
  face of $C$ which contains $G$.

  \section{The short cubical local $h$-vector}
  \label{sec:short}

  This section includes definitions, basic examples and elementary properties of
  short cubical local $h$-vectors.

  \begin{definition} \label{def:short}
    Let $C$ be a $d$-dimensional cube. For any cubical
    subdivision $\Gamma$ of $C$, we define a polynomial $\ell_C (\Gamma, x)
    = \ell_0 + \ell_1 x + \cdots + \ell_d x^d$ by
      \begin{equation} \label{eq:defshort}
        \ell_C (\Gamma, x) \ = \sum_{F \in \fF(C)} \ (-1)^{d - \dim(F)} \,
        h^{(sc)} (\Gamma_F, x).
      \end{equation}
    We call $\ell_C (\Gamma, x)$ the \emph{short cubical local $h$-polynomial}
    of $\Gamma$ (with respect to $C$). We call $\ell_C (\Gamma) = (\ell_0,
    \ell_1,\dots,\ell_d)$ the \emph{short cubical local $h$-vector} of $\Gamma$
    (with respect to $C$).
  \end{definition}

  \begin{figure}
  \epsfysize = 2.0 in \centerline{\epsffile{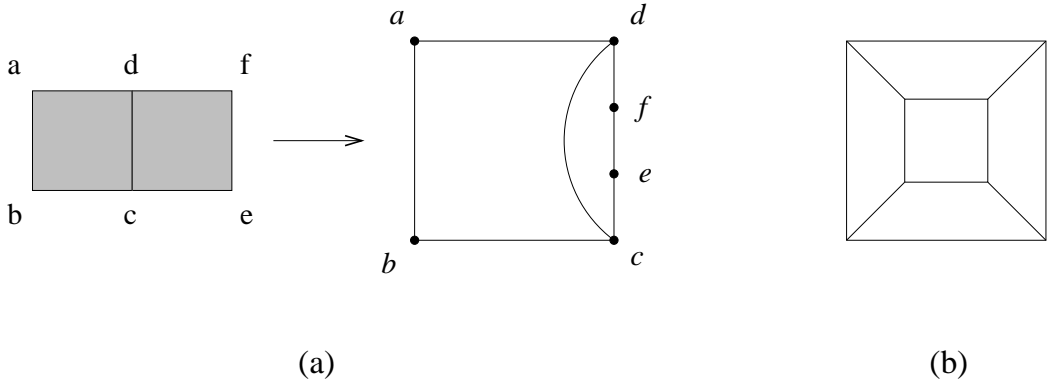}}
  \caption{Two cubical subdivisions of a square.}
  \label{fig:cubsub}
  \end{figure}

  \begin{example} \label{ex:short}
    (a) For the trivial subdivision $\Gamma = C$ of the $d$-dimensional cube $C$ we
    have
      $$ \ell_C (\Gamma, x) \ = \ \begin{cases}
         1, & \text{if \ $d=0$} \\
         0, & \text{if \ $d \ge 1$}. \end{cases} $$
    This follows from (\ref{eq:defshort}) since for $0 \le k \le d$, there are
    $2^{d-k} {d \choose k}$ faces $F \in \fF(C)$ of dimension $k$ and we have $h^{(sc)}
    (\Gamma_F, x) = 2^k$ for every such face.

    (b) Suppose that $\dim (C) = 1$. We have $h^{(sc)} (\Gamma, x) = tx+t+2$ and
    $\ell_C (\Gamma, x) = t(x+1)$, where $t$ is the number of interior vertices of
    $\Gamma$.

    (c) Part (a) of Figure \ref{fig:cubsub} shows a nongeometric cubical subdivision
    $\Gamma$ of a square $C$ ($\Gamma$ itself is shown as a geometric cubical complex).
    We have $h^{(sc)} (\Gamma, x) = 2x+6$ and $\ell_C (\Gamma, x) = 0$.

    (d) Let $C$ be a facet ($d$-dimensional face) of a $(d+1)$-dimensional cube $D$.
    The complex of faces of $D$ other than $C$ and $D$ defines a cubical subdivision
    $\Gamma$ of $C$ (this is the subdivision defined by the Schlegel diagram of $D$
    with respect to $C$; see \cite[Section 5.2]{Zie95}). The special case
    $d=2$ is shown in part (b) of Figure \ref{fig:cubsub}. Since for the boundary
    complex (complex of proper faces) $\partial D$ of $D$ we have $h^{(sc)} (\partial
    D, x) = 2^{d+1} (1 + x + \cdots + x^d)$
    (see, for instance, \cite[Lemma 1 (iv)]{Ad96}) and $\Gamma_F$ is the trivial
    subdivision of $F$ for every $F \in \fF(C) \sm \{C\}$, we may conclude that
      $$  h^{(sc)} (\Gamma_F, x) \ = \ \begin{cases}
         2^d \, (2 + 2x + \cdots + 2x^{d-1} + x^d), & \text{if \ $F=C$} \\
         2^{\dim(F)}, & \text{otherwise} \end{cases} $$
    for $F \in \fF(C)$. A simple calculation shows that $\ell_C (\Gamma, x) = 2^d (1+x)
    (1 + x + \cdots + x^{d-1})$. In the special case $d=2$ we have $h^{(sc)} (\Gamma,
    x) = 4x^2 + 8x + 8$ and $\ell_C (\Gamma, x) = 4(x+1)^2$.

    (e) Let $\Gamma$ be the complex of faces of two 3-dimensional cubes $C$ and $C'$,
    intersecting on a common facet $G$. Let $\sigma: \fF(\Gamma) \to \fF(C)$ be the
    subdivision map which pushes $C'$ into $C$, so that the facet of $C'$ opposite to
    $G$ ends up in the relative interior of $G$. Formally, we have $\sigma (G) = \sigma
    (C') = C$, $\sigma (F) = G$ for every proper face of $C'$ not contained in $G$
    and $\sigma (F) = F$ for all other nonempty faces of $\Gamma$. Thus, the
    restriction of this subdivision to the facet $G$ of $C$ is the one shown in part
    (b) of Figure \ref{fig:cubsub}, while the restriction to any other proper face of
    $C$ is the trivial subdivision. We have $h^{(sc)} (\Gamma, x) = 4(x+3)$ and $\ell_C
    (\Gamma, x) = -4x(x+1)$. The subdivision of part (a) of Figure \ref{fig:cubsub}
    is a two-dimensional version of this construction. \qed
  \end{example}

  \begin{remark} \label{rem:x+1}
    Let $\Gamma$ be a cubical subdivision of a cube $C$ of positive dimension $d$.
    We recall \cite[Lemma 1 (ii)]{Ad96} that $h^{(sc)} (\KR, -1) = 2^k \chi(\KR)$
    for every cubical complex $\KR$ of dimension $k$, where $\chi(\KR) = 1 +
    \widetilde{\chi} (\KR)$ is the Euler characteristic of $\KR$. Thus $h^{(sc)}
    (\Gamma_F, -1) = 2^k$ for every face $F \in \fF(C)$ of dimension $k$. It follows
    from (\ref{eq:defshort}) that $\ell_C (\Gamma, -1) = 0$ and hence the polynomial
    $\ell_C (\Gamma, x)$ is divisible by $x+1$.  \qed
  \end{remark}

  Let $\Gamma$ be a cubical subdivision of a cube $C$ with associated subdivision map
  $\sigma$. The \emph{excess} of a face $F \in \Gamma \sm \{\varnothing\}$ is defined
  as $e(F) := \dim \sigma(F) - \dim(F)$. The following statement is the analogue of
  \cite[Proposition 2.2]{Sta92} in our setting.

  \begin{proposition} \label{prop:excess}
    For every cubical subdivision $\Gamma$ of a $d$-dimensional cube $C$ we have
      \begin{equation} \label{eq:propexcess}
        \ell_C (\Gamma, x) \ = \ (-1)^d \sum_{F \in \Gamma \sm
        \{\varnothing\}} (-2)^{\dim(F)} \, x^{d-e(F)} (x-1)^{e(F)},
      \end{equation}
    where $e(F)$ is the excess of a nonempty face $F$ of $\Gamma$.
  \end{proposition}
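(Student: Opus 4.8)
The plan is to unwind Definition~\ref{def:short} and reorganize the resulting double sum so that it is indexed by the faces of $\Gamma$ rather than by the faces of $C$, following the model of \cite[Proposition~2.2]{Sta92}. The restriction $\Gamma_F = \sigma^{-1}(\fF(F)) \cup \{\varnothing\}$ is a cubical complex of dimension $\dim(F)$ whose nonempty faces are exactly the faces $G$ of $\Gamma$ with $\sigma(G) \le F$, so (\ref{eq:defsc}) gives
\[
  h^{(sc)}(\Gamma_F, x) \ = \sum_{\substack{G \in \Gamma \sm \{\varnothing\} \\ \sigma(G) \le F}} (2x)^{\dim(G)} (1-x)^{\dim(F) - \dim(G)}.
\]
Substituting this into (\ref{eq:defshort}) and interchanging the order of summation, I would obtain
\[
  \ell_C(\Gamma, x) \ = \sum_{G \in \Gamma \sm \{\varnothing\}} (2x)^{\dim(G)} \sum_{\substack{F \in \fF(C) \\ F \ge \sigma(G)}} (-1)^{d - \dim(F)} (1-x)^{\dim(F) - \dim(G)}.
\]

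The crux is then the evaluation of the inner sum. Fix $G$ and set $E := \sigma(G)$, so that $\dim(E) = \dim(G) + e(G)$. The faces of $C$ that contain $E$ form an interval in the face poset of $C$ which is isomorphic to the Boolean lattice of rank $d - \dim(E)$; hence, for each $j$ with $0 \le j \le d - \dim(E)$, there are exactly $\binom{d - \dim(E)}{j}$ such faces of dimension $\dim(E) + j$. The inner sum therefore equals
\[
  (1-x)^{e(G)} \sum_{j=0}^{d - \dim(E)} \binom{d - \dim(E)}{j} (-1)^{d - \dim(E) - j} (1-x)^{j} \ = \ (1-x)^{e(G)} \, (-x)^{d - \dim(G) - e(G)}
\]
by the binomial theorem.

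Finally I would substitute this back, merge $(2x)^{\dim(G)}$ with $(-x)^{d - \dim(G) - e(G)}$ into a single monomial in $x$, and reorganize the signs via $(1-x)^{e(G)} = (-1)^{e(G)}(x-1)^{e(G)}$ and $(-1)^{\dim(G)} 2^{\dim(G)} = (-2)^{\dim(G)}$; pulling the global factor $(-1)^d$ out of the sum then yields (\ref{eq:propexcess}) directly. The argument is essentially a bookkeeping computation and I do not expect a real obstacle beyond keeping track of signs; the only genuinely geometric ingredient is the Boolean structure of the coface poset of a face of $C$ used above. As a check, in the degenerate case $d = 0$ the formula must reduce to $\ell_C(\Gamma, x) = 1$, and more generally one can compare with the values computed in Example~\ref{ex:short}.
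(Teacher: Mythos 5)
Your proposal is correct and follows essentially the same route as the paper: expand the defining sum via (\ref{eq:defsc}), interchange the order of summation so the outer sum runs over nonempty faces of $\Gamma$, and evaluate the inner sum over the Boolean interval $[\sigma(G),C]$ by the binomial theorem. The sign bookkeeping in your final step checks out, so there is nothing to add.
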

  \begin{proof}
    We compute that
      \begin{eqnarray*}
      \ell_C (\Gamma, x) &=& \sum_{G \in \fF(C)} \ (-1)^{d - \dim(G)} \, h^{(sc)}
      (\Gamma_G, x) \\
      & & \\
      &=& \sum_{G \in \fF(C)} \ (-1)^{d - \dim(G)} \, \left( \, \sum_{F \in \Gamma_G
      \sm \{\varnothing\}} \ (2x)^{\dim(F)} \, (1-x)^{\dim(G) - \dim(F)} \right) \\
      & & \\
      &=& \sum_{F \in \Gamma \sm \{\varnothing\}} \ (-1)^d \left( \frac{2x}{1-x}
      \right)^{\dim(F)} \left( \, \sum_{G \in \fF(C): \, \sigma(F) \subseteq G} \
      (x-1)^{\dim(G)} \right)
      \end{eqnarray*}
    and observe that the inner sum in the last expression is equal to
    $(x-1)^{\dim \sigma(F)} x^{d - \dim \sigma(F)}$, since the interval $[\sigma(F),
    C]$ in $\fF(C)$ is a Boolean lattice of rank $d - \dim \sigma(F)$. The proposed
    formula follows.
  \end{proof}

  Next we consider products of cubical subdivisions. Let $\Gamma$ be a cubical
  subdivision of a $d$-dimensional cube $C$ with associated subdivision map $\sigma$
  and $\Gamma'$ be a cubical subdivision of a $d'$-dimensional cube $C'$ with
  associated subdivision map $\sigma'$. Then $\Gamma \times \Gamma'$ is a cubical
  subdivision of the $(d+d')$-dimensional cube $C \times C'$, if one defines the
  carrier of a face $G \times G'$ of $\Gamma \times \Gamma'$ as the product $\sigma(G)
  \times \sigma'(G')$.

  \begin{proposition} \label{prop:product}
    For cubical subdivisions $\Gamma$ and $\Gamma'$ of cubes $C$ and $C'$,
    respectively, we have $\ell_{C \times C'} (\Gamma \times \Gamma', x) = \ell_C
    (\Gamma, x) \ell_{C'} (\Gamma', x)$.
  \end{proposition}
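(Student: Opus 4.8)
The plan is to deduce multiplicativity directly from the excess formula of Proposition~\ref{prop:excess}, which turns the statement into a purely formal factorization of a sum. First I would record the elementary facts about the product subdivision that make this work. Since $\fF(\KR \times \KR') \cong \fF(\KR) \times \fF(\KR')$, the nonempty faces of $\Gamma \times \Gamma'$ are exactly the products $F \times F'$ with $F \in \Gamma \sm \{\varnothing\}$ and $F' \in \Gamma' \sm \{\varnothing\}$, and $\dim(F \times F') = \dim(F) + \dim(F')$. By the definition of the carrier on $\Gamma \times \Gamma'$ given just before the statement, the carrier of $F \times F'$ is $\sigma(F) \times \sigma'(F')$, so $\dim \sigma(F \times F') = \dim \sigma(F) + \dim \sigma'(F')$ as well. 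Subtracting, the excess is additive across products: $e(F \times F') = e(F) + e(F')$.

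With these observations in hand, I would apply Proposition~\ref{prop:excess} to the $(d+d')$-dimensional cube $C \times C'$ and split the resulting sum. Writing $\sum_F$ and $\sum_{F'}$ for sums over $\Gamma \sm \{\varnothing\}$ and $\Gamma' \sm \{\varnothing\}$, one obtains
\begin{align*}
\ell_{C \times C'}(\Gamma \times \Gamma', x) &= (-1)^{d+d'} \sum_{F}\sum_{F'} (-2)^{\dim(F)+\dim(F')}\, x^{(d+d')-e(F)-e(F')}\,(x-1)^{e(F)+e(F')} \\
&= \left( (-1)^d \sum_{F} (-2)^{\dim(F)}\, x^{d-e(F)}\,(x-1)^{e(F)} \right)
   \left( (-1)^{d'} \sum_{F'} (-2)^{\dim(F')}\, x^{d'-e(F')}\,(x-1)^{e(F')} \right),
\end{align*}
and each factor on the right is precisely the expression furnished by Proposition~\ref{prop:excess} for $\ell_C(\Gamma,x)$ and $\ell_{C'}(\Gamma',x)$ respectively. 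This is exactly the claimed identity.

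The only point requiring a separate (easy) verification is the assertion made in the paragraph preceding the statement, namely that $\Gamma \times \Gamma'$, equipped with the product carrier map $(G,G') \mapsto \sigma(G) \times \sigma'(G')$, really is a cubical subdivision of $C \times C'$ in the sense of Section~\ref{subsec:cubsub}. This amounts to checking that $(\Gamma \times \Gamma')_{F \times F'} = \Gamma_F \times \Gamma'_{F'}$ for $F \in \fF(C)$ and $F' \in \fF(C')$, that this subcomplex is a ball of dimension $\dim(F) + \dim(F') = \dim(F \times F')$, and that its interior faces are exactly those in $\sigma^{-1}(F \times F')$ --- all of which follow from the facts that a product of balls is a ball and that a cell of a product is interior precisely when both of its factors are interior. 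Once this is granted, the argument above is entirely formal, so I do not anticipate any genuine obstacle; the only mild subtlety is keeping the bookkeeping of signs and exponents straight in the factorization step.
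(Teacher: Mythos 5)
Your argument is correct, but it takes a different route from the paper's. The paper works directly from the defining equation (\ref{eq:defshort}): it writes $\ell_{C\times C'}(\Gamma\times\Gamma',x)$ as a sum over $\fF(C)\times\fF(C')$, uses the identification $(\Gamma\times\Gamma')_{F\times F'}=\Gamma_F\times\Gamma'_{F'}$, and then invokes the multiplicativity of the short cubical $h$-polynomial under products, equation (\ref{eq:prodsc}), to factor the double sum. You instead pass through Proposition \ref{prop:excess} and observe that the excess is additive, $e(F\times F')=e(F)+e(F')$, because the carrier of a product face is by definition the product of the carriers; the factorization is then immediate since dimension and excess both split. Both proofs are formal factorizations of a double sum over a product poset; the difference is which multiplicative ingredient carries the weight --- the product formula for $h^{(sc)}$ in the paper's version, versus the additivity of dimension and excess in yours. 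Your version has the mild advantage of summing only over faces of the subdivision rather than over pairs (face of the cube, face of its restriction), at the cost of quoting Proposition \ref{prop:excess} (whose own proof does use the defining equation). Your closing remark that one must check $\Gamma\times\Gamma'$ is genuinely a cubical subdivision with the product carrier map is well taken; the paper asserts this in the paragraph preceding the statement without proof, and your sketch (products of balls are balls, a product cell is interior iff both factors are) is the right justification.
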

  \begin{proof}
  By construction, the restriction of $\Gamma \times \Gamma'$ to a face $F \times F'$
  is equal to $\Gamma_F \times \Gamma'_{F'}$. Thus, recalling that $\fF(C \times C')
  = \fF(C) \times \fF(C')$ and using the defining equation (\ref{eq:defshort}) and
  (\ref{eq:prodsc}), we find that
    \begin{eqnarray*}
      \ell_{C \times C'} (\Gamma \times \Gamma', x)  &=& \sum_{F \times F' \in \fF(C
      \times C')} \ (-1)^{\dim(C \times C') - \dim(F \times F')} \,
        h^{(sc)} ( (\Gamma \times \Gamma')_{F \times F'}, x)  \\
      & & \\
      &=&  \sum_{F \times F' \in \fF(C)
      \times \fF(C')} \ (-1)^{d+d' - \dim(F) - \dim(F')} \,
        h^{(sc)} (\Gamma_F \times \Gamma'_{F'}, x)  \\
      & & \\
      &=& \sum_{F \in \fF(C), \, F' \in \fF(C')} \ (-1)^{d - \dim(F)} \,
        h^{(sc)} (\Gamma_F, x) \ (-1)^{d' - \dim(F')} \, h^{(sc)} (\Gamma'_{F'}, x) \\
      & & \\
      &=& \ell_C (\Gamma, x) \, \ell_{C'} (\Gamma', x),
      \end{eqnarray*}
    as promised.
  \end{proof}

  We end this section with some more examples and remarks.

  \begin{example} \label{ex:shortell}
    (a) Let $\ell_C (\Gamma, x) = \ell_0 + \ell_1 x + \cdots + \ell_d x^d$, where
     $\Gamma$ and $C$ are as in Definition \ref{def:short}. Since for $F \in \Gamma
     \sm \{\varnothing\}$ we have $e(F) = d$ if and only if $F$ is a vertex in the
     interior of $C$, it follows from Proposition \ref{prop:excess} that $\ell_0$
     is equal to the number of interior vertices of $\Gamma$. Alternatively, this
     follows from the equality
       $$ \ell_0 \ = \sum_{F \in \fF(C)} \ (-1)^{d - \dim(F)} \, h^{(sc)} (\Gamma_F,
       0) $$
     by a M\"obius inversion argument (see \cite[Proposition 3.7.1]{StaEC1}), since
     $h^{(sc)} (\KR, 0)$ is equal to the number of vertices of $\KR$ for every cubical
     complex $\KR$. The coefficient $\ell_d$ is equal to the coefficient of $x^d$ in
     $h^{(sc)} (\Gamma, x)$. By setting $x=0$ in
       $$ x^d \, h^{(sc)} (\Gamma, 1/x) \ = \ h^{(sc)} (\inte(\Gamma), x) $$
     (see Corollary \ref{cor:screc}), we deduce that $\ell_d$ is also equal to the
     number of interior vertices of $\Gamma$. Alternatively, this follows from our
     previous remark on $\ell_0$ and the symmetry of $\ell_C (\Gamma, x)$ (Theorem
     \ref{thm:symm}).

    (b) Similarly, from (\ref{eq:propexcess}) we deduce the formula $\ell_1 =
     -d f_0^\circ + 2 f_1^\circ - \tilde{f}_0$ for the coefficient of $x$ in $\ell_C
     (\Gamma, x)$, where (as in part (g) of \cite[Example 2.3]{Sta92}) $f_0^\circ$
     and $f_1^\circ$ are the numbers of interior vertices and edges of $\Gamma$,
     respectively, and $\tilde{f}_0$ is the number of vertices which lie in the
     relative interior of a facet of $C$.

    (c) Combining part (a) with Remark \ref{rem:x+1} we find that $\ell_C (\Gamma,
     x) = t(x+1)^2$ for every cubical subdivision $\Gamma$ of a square $C$ (the
     case $d=2$), where $t$ is the number of interior vertices of $\Gamma$.

    (d) Part (c) and Example \ref{ex:short} (b) imply that for $d \le 2$, the
     polynomial $\ell_C (\Gamma, x)$ depends only on the face poset (in fact, on the
     short cubical $h$-polynomial) of the complex $\Gamma$ (and not on $\sigma$).
     This is no longer true for $d \ge 3$. For instance, the complex $\Gamma$ of
     Example \ref{ex:short} (e) gives rise to a subdivision of the 3-dimensional
     cube $C$ which is the product of the trivial subdivision of a square and the
     subdivision of a line segment with one interior point. For this subdivision
     we have $\ell_C (\Gamma, x) = 0$. \qed
  \end{example}

  \section{Main properties of short cubical local $h$-vectors}
  \label{sec:main}

  This section develops the main properties of short cubical local $h$-vectors.
  From these properties, locality (Theorem \ref{thm:local}), symmetry (Theorem
  \ref{thm:symm}) and monotonicity (Corollary \ref{cor:monotone}) are proved in
  this section, while the proof of nonnegativity (Theorem \ref{thm:nonn}) is
  defered to Section \ref{sec:nonn}.

  The proof of our first result is a variation of that of Theorem 3.2 in
  \cite[Section 3]{Sta92}.

  \begin{theorem} \label{thm:local}
    Let $\KR$ be a pure cubical complex. For every cubical subdivision $\KR'$
    of $\KR$ we have
      \begin{equation} \label{eq:thmlocal}
        h^{(sc)} (\KR', x) \ = \sum_{F \in \fF(\KR)} \
        \ell_F (\KR'_F, x) \, h (\link_\KR (F), x).
      \end{equation}
  \end{theorem}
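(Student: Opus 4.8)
The plan is to compute the right-hand side of (\ref{eq:thmlocal}) by substituting the definition (\ref{eq:defshort}) of the short cubical local $h$-polynomial and rearranging the resulting double sum, in the spirit of the proof of \cite[Theorem 3.2]{Sta92}. Writing $\ell_F(\KR'_F,x) = \sum_{G \in \fF(F)} (-1)^{\dim(F)-\dim(G)} h^{(sc)}((\KR'_F)_G, x)$, and observing that the restriction $(\KR'_F)_G$ of $\KR'_F$ to a face $G \le F$ is just the restriction $\KR'_G$ of $\KR'$ to $G$ (since $\sigma^{-1}(\fF(G))$ does not depend on which larger face $F$ we view $G$ inside of), the right-hand side becomes
  \begin{equation*}
    \sum_{F \in \fF(\KR)} \ \sum_{G \in \fF(F)} (-1)^{\dim(F) - \dim(G)} \,
    h^{(sc)} (\KR'_G, x) \, h (\link_\KR (F), x).
  \end{equation*}
I would then interchange the order of summation, summing first over $G \in \fF(\KR)$ and then over faces $F$ with $G \le F$, i.e. over the interval $[G, \hat 1]$ inside the face poset; more precisely $F$ ranges over faces of $\KR$ containing $G$, which correspond to faces of $\link_\KR(G)$.

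The key step is then to identify, for each fixed $G$, the inner sum $\sum_{F \ge G} (-1)^{\dim(F)-\dim(G)} h(\link_\KR(F), x)$. Using that for $F \ge G$ the link $\link_{\KR}(F)$ is (isomorphic to) $\link_{\link_\KR(G)}(\overline F)$, where $\overline F$ is the face of $\link_\KR(G)$ corresponding to $F$, and that $\dim(F) - \dim(G)$ relates to the cardinality of $\overline F$, this inner sum should collapse to something simple. In the simplicial setting the analogous alternating sum over a link, weighted by $h$-polynomials of iterated links, telescopes; I expect here that
  \begin{equation*}
    \sum_{F \in \fF(\KR): \, F \ge G} (-1)^{\dim(F) - \dim(G)} \, h (\link_\KR (F), x)
  \end{equation*}
equals $(2x)^{\dim(G)} (1-x)^{d - \dim(G)}$ times a correction, or more likely that after also summing over $G$ and reassembling with the factors $h^{(sc)}(\KR'_G,x)$ one recovers exactly $h^{(sc)}(\KR',x) = \sum_{G' \in \fF(\KR')} (2x)^{\dim(G')}(1-x)^{d-\dim(G')}$. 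The cleanest route is probably to expand $h^{(sc)}(\KR'_G, x)$ via (\ref{eq:defsc}) as a sum over faces $G' \in \fF(\KR'_G) = \sigma^{-1}(\fF(G))$, so that the whole right-hand side becomes a triple sum over $G' \in \fF(\KR')$, $G = \sigma(G')$ actually ranging over faces of $\KR$ with $G' \in \KR'_G$ (equivalently $G \ge \sigma(G')$), and $F \ge G$; then collecting the contribution of each fixed $G' \in \fF(\KR')$ and checking it equals $(2x)^{\dim(G')}(1-x)^{d-\dim(G')}$.

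The main obstacle will be the purely combinatorial identity that makes this collection work: for fixed $G' \in \fF(\KR')$ with carrier $G_0 = \sigma(G')$, I must show that
  \begin{equation*}
    \sum_{G \in \fF(\KR): \, G \ge G_0} \ \sum_{F \in \fF(\KR): \, F \ge G}
    (-1)^{\dim(F) - \dim(G)} \, (2x)^{\dim(G')} (1-x)^{\dim(G) - \dim(G')} \,
    h (\link_\KR (F), x)
  \end{equation*}
simplifies to $(2x)^{\dim(G')} (1-x)^{d - \dim(G')}$. This requires knowing how the cubical $h$-polynomial interacts with the defining relation (\ref{eq:hsimplicial}) of $h(\link_\KR(F),x)$ and with the Boolean/cubical structure of the intervals $[G_0, F]$; the inner double sum over $G_0 \le G \le F$ should factor, via a binomial/Möbius computation on the cubical interval $[G_0,F]$ (each such interval, being a face interval in a cubical complex, is a product of a cube and a Boolean lattice), leaving an alternating sum over $F \ge G_0$ of $h(\link_\KR(F),x)$ against a monomial, which then telescopes against the expansion of $h(\link_\KR(G_0),x)$. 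I would handle the bookkeeping by passing everywhere to links of $G_0$ so that the statement reduces to the identity $\sum_{E} (2x/(1-x))^{?}(\cdots) = \cdots$ already implicit in Adin's formula (\ref{eq:lemsc}) and in Proposition \ref{prop:excess}; indeed, comparing with the proof of Proposition \ref{prop:excess}, the same device — summing $(x-1)^{\dim(G)}$ over a Boolean interval $[\sigma(F),C]$ — is exactly what is needed here with $C$ replaced by the various $F \ge \sigma(G')$, so the cleanest presentation may be to mimic that calculation verbatim in the global complex $\KR$.
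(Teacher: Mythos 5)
Your proposal is in substance the same double-counting argument as the paper's proof (both modeled on Stanley's Theorem 3.2): expand $\ell_F(\KR'_F,x)$ by its definition, use $(\KR'_F)_G=\KR'_G$, interchange sums, and show that each face of $\KR'$ contributes exactly its term of $h^{(sc)}(\KR',x)$. The difference is only in how the resulting alternating sums are evaluated. The paper first computes $\sum_{G\ge F}(-1)^{\dim(G)-\dim(F)}h(\link_\KR(G),x)=-(1-x)^{\dim(\KR)-\dim(F)}\,\widetilde{\chi}(\link_\KR(F))$ by applying Stanley's Lemma 3.1 to the pure complex $\link_\KR(F)$ (this is where purity enters), and then uses $\sum_{F\ge\sigma(E)}\widetilde{\chi}(\link_\KR(F))=-1$; you combine these two steps into the single identity you display for each $G'\in\fF(\KR')$ with carrier $G_0$. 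That identity is correct, and the computation you only gesture at does close up: the interval $[G_0,F]$ above a \emph{nonempty} face is a Boolean lattice of rank $\dim(F)-\dim(G_0)$ (not a product of a cube and a Boolean lattice --- that is $[\varnothing,F]$, and the distinction matters because the rank numbers differ), so the sum over $G_0\le G\le F$ contributes $(1-x)^{\dim(G_0)-\dim(G')}(-x)^{\dim(F)-\dim(G_0)}$, and what remains to prove is $\sum_{F\ge G_0}(-x)^{\dim(F)-\dim(G_0)}h(\link_\KR(F),x)=(1-x)^{\dim(\KR)-\dim(G_0)}$, which follows by expanding each $h(\link_\KR(F),x)$ via (\ref{eq:hsimplicial}) (purity again), reindexing over pairs $F\le H$ in the star of $G_0$, and using that $\sum_{G_0\le F\le H}(-1)^{\dim(H)-\dim(F)}$ vanishes unless $H=G_0$. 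Two caveats: your first guess, that $\sum_{F\ge G}(-1)^{\dim(F)-\dim(G)}h(\link_\KR(F),x)$ is a correction of $(2x)^{\dim(G)}(1-x)^{d-\dim(G)}$, is not the right shape (the answer involves $\widetilde{\chi}$ of the link, as above), although your triple-sum fallback sidesteps it; and the final identity, which carries essentially the whole weight of the proof, is asserted rather than verified in your write-up.
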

  \begin{proof}
  Denoting by $R(\KR', x)$ the right-hand side of (\ref{eq:thmlocal}) and setting
  $P = \fF(\KR)$, we compute that
    \begin{eqnarray*}
      R(\KR', x) &=& \sum_{G \in P} \ \ell_G (\KR'_G, x) \, h (\link_\KR (G), x) \\
      & & \\
      &=& \sum_{G \in P} \left( \, \sum_{F \le_P \, G} \ (-1)^{\dim(G)
      - \dim(F)} \, h^{(sc)} (\KR'_F, x) \right) h (\link_\KR (G), x) \\
      & & \\
      &=& \sum_{F \in P} \ h^{(sc)} (\KR'_F, x) \left( \, \sum_{F \le_P \, G}
      \ (-1)^{\dim(G) - \dim(F)} \, h (\link_\KR (G), x) \right) \\
      & & \\
      &=& - \, \sum_{F \in P} \left( \, \sum_{E \in \KR'_F \sm
      \{\varnothing\}} \ (2x)^{\dim(E)} \, (1-x)^{\dim(\KR) - \dim(E)} \right)
      \widetilde{\chi} (\link_\KR (F)) \\
      & & \\
      &=& - \sum_{E \in \KR' \sm \{\varnothing\}} \ (2x)^{\dim(E)} \,
      (1-x)^{\dim(\KR) - \dim(E)} \sum_{\sigma(E) \, \le_P \, F} \
      \widetilde{\chi} (\link_\KR (F)).
    \end{eqnarray*}

  \smallskip
  \noindent
  The fourth of the previous equalities follows from the defining equation
    $$ h^{(sc)} (\KR'_F, x) \ = \ \sum_{E \in \KR'_F \sm \{\varnothing\}} \
    (2x)^{\dim(E)} \, (1-x)^{\dim(F) - \dim(E)} $$
  and the equality
    $$ \sum_{F \le_P \, G} \ (-1)^{\dim(G) - \dim(F)} \, h (\link_\KR (G), x)
    \ = \ - (1-x)^{\dim(\KR) - \dim (F)} \, \widetilde{\chi} (\link_\KR (F)), $$
  for given $F \in P$. The latter is equivalent to
    $$ \sum_{F \le_P \, G} \ (-1)^{\dim(\KR) - \dim(G)} \, h (\link_\KR (G), x)
    \ = \ - (x-1)^{\dim(\KR) - \dim (F)} \, \widetilde{\chi} (\link_\KR (F)), $$
  which in turn follows by applying \cite[Lemma 3.1]{Sta92} to $\link_\KR (F)$,
  which is a pure simplicial complex of dimension $\dim(\KR) - \dim (F) - 1$.
  As in the proof of Theorem 3.2 in \cite[p.~813]{Sta92} we find that
    $$\sum_{\sigma(E) \, \le_P \, F} \ \widetilde{\chi} (\link_\KR (F)) \ =
    \ -1$$
  and hence
    $$ R(\KR', x) \ = \ \sum_{E \in \KR' \sm \{\varnothing\}} \ (2x)^{\dim(E)} \,
    (1-x)^{\dim(\KR) - \dim(E)} \ = \ h^{(sc)} (\KR', x), $$
  as claimed in the statement of the theorem.
  \end{proof}

  Our second result on short cubical local $h$-vectors is as follows.

  \begin{theorem} \label{thm:symm}
    For every cubical subdivision $\Gamma$ of a $d$-dimensional cube $C$ we have
      \begin{equation} \label{eq:thmsymm}
        x^d \, \ell_C (\Gamma, 1/x) \ = \ \ell_C (\Gamma, x).
      \end{equation}
    Equivalently, we have $\ell_i = \ell_{d-i}$ for $0 \le i \le d$, where
    $\ell_C (\Gamma) = (\ell_0, \ell_1,\dots,\ell_d)$.
  \end{theorem}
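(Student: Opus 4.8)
The plan is to deduce the symmetry of $\ell_C(\Gamma,x)$ from the reciprocity relation for short cubical $h$-vectors of manifolds with boundary (Corollary \ref{cor:screc}), applied to each restriction $\Gamma_F$, combined with the inclusion--exclusion definition \eqref{eq:defshort}. The key structural input is that for every nonempty face $F$ of $C$ of dimension $k$, the restriction $\Gamma_F$ is a cubical complex homeomorphic to a $k$-dimensional ball, so Corollary \ref{cor:screc} gives $x^k h^{(sc)}(\Gamma_F,1/x) = h^{(sc)}(\inte(\Gamma_F),x)$. The subdivision structure ties the interiors together: by property (b) of a cubical subdivision, $\inte(\Gamma_F) = \sigma^{-1}(F)$, and these sets partition $\fF(\Gamma)$ as $F$ ranges over $\fF(C)$. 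This suggests expressing everything in terms of a sum over all faces $E$ of $\Gamma$, weighted according to which $\inte(\Gamma_F)$ they fall into.

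First I would compute $x^d\,\ell_C(\Gamma,1/x)$ directly from \eqref{eq:defshort}: writing $\dim(F) = k$ and pulling the factor $x^d = x^k \cdot x^{d-k}$ through, I get
  \begin{equation*}
    x^d\,\ell_C(\Gamma,1/x) \ = \ \sum_{F \in \fF(C)} (-1)^{d-k}\, x^{d-k}\, \bigl( x^{k}\, h^{(sc)}(\Gamma_F, 1/x) \bigr)
    \ = \ \sum_{F \in \fF(C)} (-1)^{d-k}\, x^{d-k}\, h^{(sc)}(\inte(\Gamma_F),x),
  \end{equation*}
using Corollary \ref{cor:screc} on $\Gamma_F$. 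Then I would expand $h^{(sc)}(\inte(\Gamma_F),x)$ via \eqref{eq:schinte} as a sum over $E \in \inte(\Gamma_F) = \sigma^{-1}(F)$ of $(2x)^{\dim(E)}(1-x)^{k-\dim(E)}$, and swap the order of summation so that the outer sum is over all $E \in \Gamma \sm \{\varnothing\}$, with $F = \sigma(E)$ forced. This collapses the double sum to a single sum over faces of $\Gamma$, with the excess $e(E) = \dim\sigma(E) - \dim(E) = k - \dim(E)$ appearing naturally in the exponents.

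The next step is to recognize the resulting single sum as exactly the right-hand side of Proposition \ref{prop:excess}. After the collapse I expect to obtain
  \begin{equation*}
    x^d\,\ell_C(\Gamma,1/x) \ = \ \sum_{E \in \Gamma \sm \{\varnothing\}} (-1)^{d - \dim\sigma(E)}\, x^{d - \dim\sigma(E)}\, (2x)^{\dim(E)}\, (1-x)^{e(E)},
  \end{equation*}
and I would rewrite the exponents using $\dim\sigma(E) = \dim(E) + e(E)$, so that $d - \dim\sigma(E) = (d - e(E)) - \dim(E)$ and the power of $x$ combines with $(2x)^{\dim(E)}$ to give $2^{\dim(E)} x^{d - e(E)}$; the sign becomes $(-1)^{d - \dim(E) - e(E)}$, which I would reorganize as $(-1)^d (-2)^{\dim(E)}$ times $(-1)^{-e(E)} = (-1)^{e(E)}$, turning $(1-x)^{e(E)}$ into $(x-1)^{e(E)}$ up to this sign. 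This should match \eqref{eq:propexcess} term by term, giving $x^d\,\ell_C(\Gamma,1/x) = \ell_C(\Gamma,x)$.

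The main obstacle is bookkeeping: getting the signs and the interplay between $\dim(E)$, $e(E)$, and $\dim\sigma(E)$ exactly right when combining the $x^{d-k}$ prefactor with the $(2x)^{\dim(E)}$ term and matching against Proposition \ref{prop:excess}. A secondary point that needs a clean statement is why $\Gamma_F$ is genuinely a cubical complex homeomorphic to a ball with $\inte(\Gamma_F) = \sigma^{-1}(F)$ — this is immediate from properties (a) and (b) in the definition of a cubical subdivision, so Corollary \ref{cor:screc} applies verbatim. Once the identification with the excess formula is in place, the equivalence with $\ell_i = \ell_{d-i}$ is just the standard reading of polynomial palindromicity and needs no further argument.
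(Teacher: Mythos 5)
Your proposal is correct, and it diverges from the paper's argument in its second half. Both proofs begin identically, by applying Corollary \ref{cor:screc} to each restriction $\Gamma_F$ to write $x^d\,\ell_C(\Gamma,1/x)$ as an alternating sum of the polynomials $h^{(sc)}(\inte(\Gamma_F),x)$. From there the paper stays at the level of the poset $\fF(C)$: it proves the M\"obius-inversion identity $h^{(sc)}(\inte(\Gamma_G),x)=\sum_{F\le G}(x-1)^{\dim(G)-\dim(F)}h^{(sc)}(\Gamma_F,x)$ (equation (\ref{eq:mobinvG})), substitutes it back, and collapses the resulting double sum using the fact that each interval $[F,C]$ is a Boolean lattice. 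You instead descend to individual faces of $\Gamma$: expanding each $h^{(sc)}(\inte(\Gamma_F),x)$ via (\ref{eq:schinte}) and using that the fibers $\sigma^{-1}(F)=\inte(\Gamma_F)$ partition $\fF(\Gamma)$, you regroup the sum by faces $E$ of $\Gamma$ and match the result term by term against the excess formula of Proposition \ref{prop:excess}. Your sign and exponent bookkeeping checks out: the term for $E$ becomes $(-1)^{d-\dim(E)-e(E)}\,2^{\dim(E)}\,x^{d-e(E)}(1-x)^{e(E)}=(-1)^d(-2)^{\dim(E)}x^{d-e(E)}(x-1)^{e(E)}$, exactly as in (\ref{eq:propexcess}). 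What your route buys is the elimination of the M\"obius inversion step, at the cost of invoking Proposition \ref{prop:excess} as an extra input; what the paper's route buys is a self-contained argument whose key identity (\ref{eq:mobinvG}) is reused later (see Remark \ref{rem:notaccept}, where it is the $\lambda$-acceptability statement of Theorem \ref{thm:gen}(a)). One small point worth a sentence in a written version: for zero-dimensional faces $F$ the ball $\Gamma_F$ has empty boundary, so you should note (as the paper does just before Corollary \ref{cor:screc}) that the reciprocity also holds in that degenerate case with $\inte(\Gamma_F)=\Gamma_F\sm\{\varnothing\}$.
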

  \begin{proof}
    Setting $P = \fF(C)$ and using (\ref{eq:defshort}) and Corollary \ref{cor:screc}
    we find that
      \begin{eqnarray*}
      x^d \, \ell_C (\Gamma, 1/x) &=& \sum_{G \in P} \ (-x)^{d - \dim(G)} \,
      x^{\dim(G)} \, h^{(sc)} (\Gamma_G, 1/x) \\
      & & \\
      &=& \sum_{G \in P} \ (-x)^{d - \dim(G)} \, h^{(sc)} (\inte(\Gamma_G), x).
      \end{eqnarray*}
    We claim that
      \begin{equation} \label{eq:mobinvG}
        h^{(sc)} (\inte(\Gamma_G), x) \ = \ \sum_{F \, \le_P \, G} \ (x-1)^{\dim(G) -
        \dim(F)} \, h^{(sc)} (\Gamma_F, x)
      \end{equation}
    for every $G \in P$. Given the claim, we may conclude that
      \begin{eqnarray*}
      x^d \, \ell_C (\Gamma, 1/x) &=& \sum_{G \in P} \sum_{F \le_P G} \ 
      (-x)^{d - \dim(G)} \, (x-1)^{\dim(G) - \dim(F)} \, h^{(sc)} (\Gamma_F, x) \\
      & & \\
      &=& \sum_{F \in P} \ (-x)^{d - \dim(F)} \, h^{(sc)} (\Gamma_F, x)
      \sum_{F \le_P G \le_P C} \left( \frac{1-x}{x} \right)^{\dim(G) -
        \dim(F)} \\
      & & \\
      &=& \sum_{F \in P} \ (-x)^{d - \dim(F)} \, h^{(sc)} (\Gamma_F, x) \,
      \left( 1/x \right)^{d - \dim(F)} \\
      & & \\
      &=& \ell_C (\Gamma, x)
      \end{eqnarray*}
    where, in the third equality, we have used the fact that the interval $[F, C]$
    in $P$ is a Boolean lattice of rank $d - \dim(F)$. To verify (\ref{eq:mobinvG})
    we note that, in view of (\ref{eq:defsc}) and (\ref{eq:schinte}), we may write
      \begin{equation} \label{eq:GGa}
        (1-x)^{-\dim(G)} \ h^{(sc)} (\Gamma_G, x) \ = \ \sum_{E \in \Gamma_G \sm
        \{\varnothing\}} \ \left( \frac{2x}{1-x} \right)^{\dim(E)}
      \end{equation}
    and
      \begin{equation} \label{eq:GGb}
        (1-x)^{-\dim(G)} \ h^{(sc)} (\inte(\Gamma_G), x) \ = \ \sum_{E \in
        \inte(\Gamma_G)} \ \left( \frac{2x}{1-x} \right)^{\dim(E)}
      \end{equation}
    for $G \in P$. Denoting by $\beta(G)$ and $\alpha(G)$ the right-hand sides
    of (\ref{eq:GGa}) and (\ref{eq:GGb}), respectively, we have
      $$ \beta(G) \ = \, \sum_{F \, \le_P \, G} \, \alpha(F) $$
    for every $G \in P$. By M\"obius inversion \cite[Proposition 3.7.1]{StaEC1} on
    $P$ we get
      $$ \alpha(G) \ = \, \sum_{F \, \le_P \, G} \, (-1)^{\dim(G) - \dim(F)} \,
         \beta(F) $$
    for every $G \in P$. Replacing $\alpha(G)$ and $\beta(F)$ with the appropriate
    expressions from the left-hand sides of (\ref{eq:GGb}) and (\ref{eq:GGa})
    shows that the last equality is equivalent to (\ref{eq:mobinvG}).
  \end{proof}

  We recall \cite[Section 4]{Sta92} that a simplicial subdivision $\Delta'$ of a
  simplicial complex $\Delta$ is called \emph{quasi-geometric} if no face of
  $\Delta'$ has all its vertices lying in a face of $\Delta$ of smaller dimension.
  A cubical subdivision $\Gamma$ of a cube $C$ will be called \emph{locally
  quasi-geometric} if there are no faces $F \in \fF(C)$ and $G \in \fF(\Gamma)$
  and vertex $v$ of $G$, satisfying the following: (a) $F$ has smaller dimension 
  than $G$; and (b) for every edge $e$ of $G$ which contains $v$, the carrier of 
  $e$ is contained in $F$.
  Clearly, every geometric cubical subdivision of (a geometric cube) $C$ is
  locally quasi-geometric. Cubical subdivisions which are not locally quasi-geometric
  are given in parts (c) and (e) of Example \ref{ex:short}. A cubical subdivision
  $\KR'$ of a cubical complex $\KR$ will be called \emph{locally quasi-geometric}
  if for every $F \in \KR \sm \{\varnothing\}$, the restriction $\KR'_F$ is a
  locally quasi-geometric subdivision of $F$. Part (e) of Example \ref{ex:short}
  shows that the following statement fails to hold for all cubical subdivisions of
  cubes.
  \begin{theorem} \label{thm:nonn}
    For every locally quasi-geometric cubical subdivision $\Gamma$ of a
    $d$-dimensional cube $C$ we have $\ell_i \ge 0$ for $0 \le i \le d$, where
    $\ell_C (\Gamma) = (\ell_0, \ell_1,\dots,\ell_d)$.
  \end{theorem}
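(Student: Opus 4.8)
The plan is to deduce the theorem from the corresponding result of Stanley for simplicial subdivisions, \cite[Theorem~4.6]{Sta92}, by decomposing $\ell_C (\Gamma, x)$ into contributions indexed by the vertices of $\Gamma$ and identifying each contribution with a (possibly relative) local $h$-polynomial of a simplicial subdivision of the combinatorial link of that vertex in $C$. First I would rewrite Proposition~\ref{prop:excess} by grouping the faces of $\Gamma$ according to their vertices. Since every face $G \in \Gamma \sm \{\varnothing\}$ has exactly $2^{\dim(G)}$ vertices, equation~(\ref{eq:propexcess}) becomes $\ell_C (\Gamma, x) = \sum_{v \in \vert(\Gamma)} \ell^{(v)} (x)$, where
\[
  \ell^{(v)} (x) \ = \ (-1)^d \sum_{v \le G \in \Gamma} (-1)^{\dim(G)} \, x^{d - e(G)} (x-1)^{e(G)}
\]
and the inner sum runs over the faces $G$ of $\Gamma$ having $v$ as a vertex. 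It therefore suffices to prove that each polynomial $\ell^{(v)} (x)$ has nonnegative coefficients.

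Next I would reinterpret $\ell^{(v)} (x)$ simplicially. The faces $G$ of $\Gamma$ with $v \le G$ correspond bijectively to the faces $E$ of the simplicial complex $\link_\Gamma (v)$, with $\dim(G) = \dim(E) + 1$ and with the excess of $E$ in the induced subdivision equal to $e(G)$. When $v$ is a vertex of $C$, assigning to $E$ the carrier $\sigma(G)$, read as a face of the $(d-1)$-simplex $\link_C (v)$, exhibits $\link_\Gamma (v)$ as a simplicial subdivision of $\link_C(v)$, and a computation over Boolean intervals entirely analogous to the proof of Proposition~\ref{prop:excess} (but carried out inside $\link_C(v)$) shows that $\ell^{(v)} (x)$ is precisely the local $h$-polynomial of this subdivision in the sense of \cite{Sta92}. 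When $v$ lies in the interior of $C$, every face $G \ni v$ has carrier $C$, the Boolean interval involved is trivial, $\link_\Gamma (v)$ is a simplicial $(d-1)$-sphere, and $\ell^{(v)} (x) = h(\link_\Gamma (v), x)$. When $v$ lies in the relative interior of a proper face of $C$ of positive dimension, $\link_\Gamma (v)$ is naturally a formal subdivision of a simplicial $(d-1)$-ball $L_v$ (a join of the boundary complex of a cross-polytope with a simplex), and the same Boolean-interval computation identifies $\ell^{(v)} (x)$ with the local $h$-polynomial of this subdivision in the sense of formal subdivisions of lower Eulerian posets \cite[Part~II]{Sta92} (equivalently, Section~\ref{sec:gen}).

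It remains to see that the three pieces are nonnegative. The sphere case is immediate, since a simplicial sphere is Cohen-Macaulay and hence has a nonnegative $h$-vector; no hypothesis on $\Gamma$ is needed here. For the simplex and ball cases I would first translate the hypothesis: if $\Gamma$ is locally quasi-geometric, then for every vertex $v$ the induced subdivision $\link_\Gamma (v)$ of $L_v$ is quasi-geometric. This is a direct unwinding of the definitions: a face $E$ of $\link_\Gamma (v)$ all of whose vertices carry into a face of $L_v$ of dimension smaller than $\dim(E)$ corresponds precisely to a face $G = E \cup \{v\}$ of $\Gamma$ together with a face $F \in \fF(C)$ with $\dim(F) < \dim(G)$ such that the carrier of every edge of $G$ through $v$ is contained in $F$ — exactly the configuration (clauses (a) and (b)) forbidden by local quasi-geometricity, since the edges of the cube $G$ through $v$ are in bijection with the vertices of the simplex $E$. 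Granting this, the simplex case follows from \cite[Theorem~4.6]{Sta92}, and the ball case from the analogous nonnegativity statement for quasi-geometric formal subdivisions of lower Eulerian posets.

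The step I expect to be the main obstacle is this last ingredient, together with the identification in the third paragraph. One must realize $\link_\Gamma (v)$ as a formal subdivision of the lower Eulerian poset $L_v$ with the correct carrier map — which, for $v$ in the relative interior of a positive-dimensional proper face of $C$, mixes the local combinatorics of $\Gamma$ along that face with the Boolean interval above $\sigma(v)$ in $\fF(C)$ — match the bookkeeping so that $\ell^{(v)} (x)$ is exactly the relative local $h$-polynomial, and have available the nonnegativity of that polynomial in the quasi-geometric case; the latter genuinely goes beyond \cite[Theorem~4.6]{Sta92} for those vertices of $\Gamma$ that do not lie on a face of $C$ of dimension $0$ or $d$, and is where the general machinery of Section~\ref{sec:gen} (or \cite[Part~II]{Sta92}) must be invoked.
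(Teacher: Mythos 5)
Your opening move is exactly the paper's: the per-vertex decomposition $\ell_C(\Gamma,x)=\sum_{v}\ell^{(v)}(x)$ (you derive it from Proposition \ref{prop:excess} by counting the $2^{\dim(G)}$ vertices of each face; the paper obtains the same terms from equation (\ref{eq:lemsc}), writing $\ell^{(v)}(x)=\sum_{F\ni v}(-1)^{d-\dim(F)}h(\link_{\Gamma_F}(v),x)$, which is the content of Theorem \ref{thm:nonnv}). Your treatment of the two extreme cases --- $v$ a vertex of $C$, where $\link_\Gamma(v)$ is a quasi-geometric subdivision of the simplex $\link_C(v)$ and \cite[Theorem 4.6]{Sta92} applies, and $v$ interior, where $\ell^{(v)}(x)=h(\link_\Gamma(v),x)\ge 0$ by Cohen--Macaulayness --- is precisely the paper's motivating discussion preceding the proof of Theorem \ref{thm:nonnv}, and your unwinding of local quasi-geometricity into a quasi-geometricity condition on vertex links is the correct use of the hypothesis.

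The gap is the intermediate case $0<e<d$ (where $\dim\sigma(v)=d-e$), which is the actual content of the theorem, and your proposal defers it to a result that does not exist. There is no nonnegativity theorem for local $h$-polynomials of quasi-geometric formal subdivisions of general lower Eulerian posets, neither in \cite[Part II]{Sta92} nor in Section \ref{sec:gen}: the only nonnegativity statement there, Theorem \ref{thm:nonnCW}, requires a maximum face and Boolean intervals, and its proof \emph{is} the proof of the present theorem. Moreover, the object you propose to subdivide is problematic on two counts. First, the face poset of the $(d-1)$-ball $L_v$ has no maximum element, so $\ell_{L_v}(\cdot,x)$ in the sense of (\ref{eq:deflocal}) is not defined. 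Second, for a topological (non-geometric) subdivision there is no canonical carrier map from $\link_\Gamma(v)$ into the cross-polytope factor of $L_v$, since the position of a face of $\Gamma$ relative to $v$ inside the relative interior of $\sigma(v)$ is not combinatorial data. If one instead takes the Boolean interval $[\sigma(v),C]$ of rank $e$ as the base poset, the rank bookkeeping of Definition \ref{def:formal} fails, because the complexes $\link_{\Gamma_F}(v)$ sitting over it have dimension $\dim(F)-1$ rather than $\dim(F)-\dim\sigma(v)-1$; this is a genuinely relative situation not covered by the formal-subdivision machinery. The paper closes exactly this gap with commutative algebra rather than a reduction: setting $\Delta=\link_\Gamma(v)$ and letting $\Delta_1,\dots,\Delta_e$ be the links $\link_{\Gamma_G}(v)$ over the codimension-one faces $G$ in $[\sigma(v),C]$, it constructs a \emph{special} linear system of parameters for $\KK[\Delta]$ in which only the last $e$ forms are constrained (each supported on vertices off $\Delta_i$) and the first $d-e$ are generic --- local quasi-geometricity is what guarantees such a system exists --- and proves exactness of a Koszul-type complex generalizing \cite[Lemma 4.9]{Sta92}, thereby identifying $\sum_{S\subseteq[e]}(-1)^{|S|}h(\Delta_S,x)$ with the Hilbert series of a module $L_v(\Gamma)$. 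Without an argument of this kind, or a proof of the missing relative nonnegativity statement, your outline does not yield the theorem.
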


  The proof of Theorem \ref{thm:nonn} is given in Section \ref{sec:nonn}.

  \begin{question} \label{que:unimodal}
    Is $\ell_C (\Gamma)$ unimodal for every locally quasi-geometric cubical
    subdivision $\Gamma$ of a cube $C$?
  \end{question}

  \begin{remark} \label{rem:quasi}
    In analogy with the simplicial case, we may call a cubical subdivision $\Gamma$
    of $C$ \emph{quasi-geometric} if no face of $\Gamma$ has all its vertices
    lying in a face of $C$ of smaller dimension. Consider a square $C$ with vertices 
    $a, b, c, d$ and edges $\{a, b\}$, $\{b, c\}$, $\{c, d\}$, $\{a, d\}$. Insert two 
    points $e$ and $f$ in the relative interior of the edge $\{b, c\}$ and two points 
    $g$ and $h$ in the relative interior of the edge $\{c, d\}$, so that $e$ lies in
    the relative interior of $\{b, f\}$ and $g$ lies in the relative interior of $\{c, 
    h\}$. The subdivision of $C$ with three two-dimensional faces, having vertex sets 
    $\{a, b, d, e\}$, $\{d, e, g, h\}$ and $\{c, e, f, g\}$, is quasi-geometric but 
    not locally quasi-geometric. We leave it to the reader to construct a
    two-dimensional example of a locally quasi-geometric cubical subdivision which
    is not quasi-geometric. \qed
  \end{remark}

  The following statement is a short cubical analogue of \cite[Theorem 4.10]{Sta92}. All
  inequalities of the form $p(x) \ge q(x)$ which appear in the sequel, where $p(x)$ and
  $q(x)$ are real polynomials, will be meant to hold coefficientwise.
    \begin{corollary} \label{cor:monotone}
      Let $\KR$ be a cubical complex such that $\link_{\KR} (F)$ is Cohen-Macaulay,
      over some field, for every face $F \in \KR$ with $\dim(F) \ge 1$. Then we have
      $h^{(sc)} (\KR', x) \ge h^{(sc)} (\KR, x)$ for every locally quasi-geometric
      cubical subdivision $\KR'$ of $\KR$.
    \end{corollary}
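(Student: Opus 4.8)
The plan is to combine the locality formula of Theorem~\ref{thm:local} with nonnegativity of short cubical local $h$-vectors (Theorem~\ref{thm:nonn}) and the Cohen--Macaulay hypothesis, exactly mirroring the simplicial argument in \cite[Theorem 4.10]{Sta92}. First I would apply Theorem~\ref{thm:local} to the locally quasi-geometric subdivision $\KR'$ of $\KR$ to write
  \[
    h^{(sc)} (\KR', x) \ = \sum_{F \in \fF(\KR)} \ \ell_F (\KR'_F, x) \, h (\link_\KR (F), x),
  \]
and then isolate the term coming from the vertices $F$ of $\KR$. For a vertex $v$, the restriction $\KR'_v$ is the trivial subdivision of the $0$-dimensional cube $\{v\}$, so by Example~\ref{ex:short}(a) we have $\ell_v (\KR'_v, x) = 1$; hence the sum of the vertex terms is $\sum_{v \in \vert(\KR)} h(\link_\KR(v), x)$, which by equation~(\ref{eq:lemsc}) is exactly $h^{(sc)}(\KR, x)$ (using that $\KR$ is pure, which follows from its links being Cohen--Macaulay of the right dimensions). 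Therefore
  \[
    h^{(sc)} (\KR', x) - h^{(sc)} (\KR, x) \ = \sum_{\substack{F \in \fF(\KR) \\ \dim(F) \ge 1}} \ \ell_F (\KR'_F, x) \, h (\link_\KR (F), x),
  \]
and it remains to show every summand on the right has nonnegative coefficients.

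For each face $F$ with $\dim(F) \ge 1$, the polynomial $h(\link_\KR(F), x)$ has nonnegative coefficients because $\link_\KR(F)$ is Cohen--Macaulay over some field (this is the standard nonnegativity of $h$-vectors of Cohen--Macaulay complexes, recalled right after~(\ref{eq:hsimplicial})). For the other factor, since $\KR'$ is a locally quasi-geometric subdivision of $\KR$, by definition the restriction $\KR'_F$ is a locally quasi-geometric cubical subdivision of the cube $F$; thus Theorem~\ref{thm:nonn} gives $\ell_F (\KR'_F, x) \ge 0$ coefficientwise. A coefficientwise product of two polynomials with nonnegative coefficients again has nonnegative coefficients, so each term $\ell_F (\KR'_F, x) \, h(\link_\KR(F), x)$ is $\ge 0$, and summing over all such $F$ yields $h^{(sc)}(\KR', x) - h^{(sc)}(\KR, x) \ge 0$, which is the assertion.

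The only genuinely delicate point is the bookkeeping needed to separate out the vertex contribution and to confirm that it reproduces $h^{(sc)}(\KR, x)$ via~(\ref{eq:lemsc}); one must check that $\KR$ is pure (needed to invoke~(\ref{eq:lemsc})) and that the subdivision map restricted to a vertex forces $\KR'_v$ to be the trivial subdivision, so that its local $h$-polynomial is $1$. Everything else is an immediate citation: Theorem~\ref{thm:local} for the locality decomposition, Theorem~\ref{thm:nonn} for $\ell_F(\KR'_F, x) \ge 0$, and Cohen--Macaulayness for $h(\link_\KR(F), x) \ge 0$. I do not anticipate any real obstacle beyond making sure the hypothesis ``$\link_\KR(F)$ Cohen--Macaulay for $\dim(F)\ge 1$'' suffices to guarantee purity of $\KR$ and that the definitions of ``locally quasi-geometric'' for complexes and for cubes are being applied consistently.
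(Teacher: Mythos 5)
Your argument is exactly the paper's: the paper rewrites the locality formula of Theorem~\ref{thm:local} as equation~(\ref{eq:thmlocaltwo}) by absorbing the vertex terms into $h^{(sc)}(\KR,x)$ via~(\ref{eq:lemsc}) (each $\ell_v(\KR'_v,x)=1$), and then cites Theorem~\ref{thm:nonn} together with Cohen--Macaulayness of the links, precisely as you do. The one caveat you flag --- that Theorem~\ref{thm:local} and~(\ref{eq:lemsc}) require $\KR$ to be pure, which the stated hypotheses do not literally force --- is an implicit assumption the paper's own proof silently shares, so there is nothing to fix.
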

    \begin{proof}
      In view of equation (\ref{eq:lemsc}), we may rewrite (\ref{eq:thmlocal}) as
        \begin{equation} \label{eq:thmlocaltwo}
         h^{(sc)} (\KR', x) \ = \ h^{(sc)} (\KR, x) \ \, + \sum_{F \in \KR: \, \dim(F)
        \ge 1} \ \ell_F (\KR'_F, x) \, h (\link_\KR (F), x).
       \end{equation}
    The result now follows from Theorem \ref{thm:nonn} and our assumptions on $\KR$.
    \end{proof}

    \begin{question} \label{que:monotone}
      Let $\KR$ be a Buchsbaum cubical complex, over some field. Does the conclusion
      of Corollary \ref{cor:monotone} hold for every cubical subdivision $\KR'$ of
      $\KR$?
    \end{question}

  We end this section with two specific kinds of cubical subdivision, to which Theorem
  \ref{thm:local} can be applied. The second one (see Example \ref{ex:cbs}) provided
  some of the motivation for this paper.

  \begin{example} \label{ex:shortstellar}
    Let $\KR$ be a cubical complex of positive dimension $d$ and let $\KR'$ be a
    cubical subdivision of $\KR$ for which the restriction $\KR'_F$ is the one in part
    (d) of Example \ref{ex:short}, for a given $d$-dimensional face $F$ of $\KR$, and
    the trivial subdivision for every other nonempty face of $\KR$. This subdivision,
    considered in \cite{Joc93} in the context of the lower bound problem for cubical
    polytopes, may be thought of as a cubical analogue of the stellar subdivision on
    a maximal face of a simplicial complex. The computations in Example \ref{ex:short}
    and (\ref{eq:thmlocaltwo}) imply that $h^{(sc)} (\KR', x) = h^{(sc)} (\KR, x) +
    2^d (1+x) (1 + x + \cdots + x^{d-1})$. \qed
  \end{example}

  \begin{example} \label{ex:cbs}
    Let $\KR$ be a $d$-dimensional cubical complex and $t$ be a positive integer. 
    Let $\KR'$ be the cubical subdivision of $\KR$ which has the following property: 
    the restriction $\KR'_F$ to any nonempty face $F$ of $\KR$ is combinatorially 
    isomorphic to the product of as many as $\dim(F)$ copies of the subdivision of a 
    line segment with $t$ interior vertices. For $t=1$, the face poset $\fF(\KR')$ 
    is isomorphic to the set of nonempty closed intervals in $\fF(\KR)$, ordered by 
    inclusion, and $\KR'$ is the \emph{cubical barycentric subdivision} (called the 
    barycentric cover in \cite{BBC97}) of $\KR$, studied in \cite{Sav10} (see also
    Example \ref{ex:cbs2}). We refer the reader to \cite[Section 2.3]{BBC97} and 
    \cite{Sav10} for more information on this special case.

    We can express the short cubical $h$-vector of $\KR'$ in terms of that of $\KR$ 
    as follows. By Example \ref{ex:short} (b) and Proposition \ref{prop:product}, we 
    have $\ell_F (\KR'_F, x) = t^{\dim(F)} (x+1)^{\dim(F)}$ for every $F \in \fF(\KR)$. 
    Thus, setting $P = \fF(\KR)$ and using Theorem \ref{thm:local}, we compute that
      \begin{eqnarray*}
      h^{(sc)} (\KR', x) &=& \sum_{F \in P} \
      (tx+t)^{\dim(F)} \, h (\link_\KR (F), x) \\
      & & \\
      &=& \sum_{F \in P} \ (tx+t)^{\dim(F)} \left( \,
      \sum_{F \le_P G} \ x^{\dim(G) - \dim(F)} (1-x)^{d - \dim(G)} \right) \\
      & & \\
      &=& (1-x)^d  \ \sum_{G \in P} \, \left( \frac{x}{1-x} \right)^{\dim(G)}
           \sum_{F \le_P G} \left( \frac{tx+t}{x} \right)^{\dim(F)}
           \\
      & & \\
      &=& (1-x)^d  \ \sum_{G \in P} \, \left( \frac{x}{1-x} \right)^{\dim(G)}
           \left( \frac{(t+2)x+t}{x} \right)^{\dim(G)} \\
      & & \\
      &=& (1-x)^d  \ \sum_{G \in P} \, \left( \frac{(t+2)x+t}{1-x} \right)^{\dim(G)} \\
      & & \\
      &=& (1-x)^d \ f_\KR \left( \frac{(t+2)x+t}{1-x} \right),
      \end{eqnarray*}
  where $f_\KR (x) = \sum_{F \in \fF(\KR)} \, x^{\dim(F)}$ is the $f$-polynomial of
  $\KR$ and for the fourth equality, we have used the fact that there are $2^{k-i}
  {k \choose i}$ faces of $G$ of dimension $i$ for $0 \le i \le k$, where $k =
  \dim(G)$. Expressing $f_\KR (x)$ in terms of $h^{(sc)} (\KR, x)$ (see, for instance,
  \cite[Equation (3)]{Sav10}), we find that
    \begin{equation} \label{eq:cbs}
       h^{(sc)} (\KR', x) \ = \ \left( \frac{tx+t+2}{2} \right)^d \,
       h^{(sc)} \left( \KR, \frac{(t+2)x+t}{tx+t+2} \right).
    \end{equation}
  The previous equality agrees with \cite[Theorem 3.2]{Sav10} in the special case $t=1$ 
  and thus recovers one of the main results of \cite{Sav10}.
    \qed
  \end{example}

  \section{Nonnegativity of short cubical local $h$-vectors}
  \label{sec:nonn}

  This section proves the nonnegativity of short cubical local $h$-vectors for
  locally quasi-geometric subdivisions (Theorem \ref{thm:nonn}). Our method follows
  that employed by Stanley to prove \cite[Theorem 4.6]{Sta92}. Thus we will assume
  familiarity with \cite[Section 4]{Sta92} and omit some of the details of the proof,
  giving emphasis to those points in which the arguments in \cite{Sta92} need to be
  adapted or generalized.

  We first observe that nonnegativity of the short cubical local $h$-vector is a
  consequence of the following statement.

  \begin{theorem} \label{thm:nonnv}
    For every locally quasi-geometric cubical subdivision $\Gamma$ of a
    $d$-dimensional cube $C$ and every vertex $v$ of $\Gamma$ we have
      \begin{equation} \label{eq:thmnonnv}
        \sum_{F \in \fF(C), v \in F}
        (-1)^{d - \dim(F)} \, h (\link_{\Gamma_F} (v), x) \, \ge \, 0.
      \end{equation}
  \end{theorem}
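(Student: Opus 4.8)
The plan is to follow the proof of \cite[Theorem~4.6]{Sta92} closely, passing to the link of $v$ in $\Gamma$ and exploiting Cohen-Macaulayness together with the reciprocity of Proposition \ref{prop:sirec}. Fix $v$, let $\tau = \sigma(v)$ be its carrier, set $k = \dim(\tau)$ and $m = d - k$, and write $W = \link_\Gamma(v)$, a simplicial complex of dimension $d - 1$. Every face $G$ of $W$ comes from a face $E_G$ of $\Gamma$ with $v \le E_G$, whose carrier $\sigma(E_G)$ contains $\tau$ and so lies in the interval $[\tau, C]$ of $\fF(C)$, a Boolean lattice of rank $m$. From the defining properties of a subdivision map one reads off that, for $F \in \fF(C)$ with $\tau \subseteq F$, the link $\link_{\Gamma_F}(v)$ is the subcomplex $W_F := \{G \in W : \sigma(E_G) \subseteq F\}$ of $W$; that each $W_F$ is a homology ball or sphere (being the link of a vertex of the cubical ball $\Gamma_F$), having $\bigcup_{F' \subsetneq F}W_{F'}$ as its topological boundary and the faces of $W$ carried exactly by $F$ as its interior (here property~(b) of the subdivision map is used); and that $W_\varnothing = \link_{\Gamma_\tau}(v)$ is a homology $(k-1)$-sphere. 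Identifying $[\tau, C]$ with the face lattice of an $(m-1)$-simplex $2^{[m]}$, the left-hand side of \eqref{eq:thmnonnv} becomes $\sum_{S \subseteq [m]}(-1)^{m-|S|}\,h(W_S, x)$ --- exactly a local $h$-polynomial of the type treated in \cite[Section~4]{Sta92}, now for the subdivision-like structure $\{W_S\}$ on $W$ over $2^{[m]}$, with ambient object a ball homeomorphic to $W_\varnothing * 2^{[m]}$, just as in \cite{Sta92} the link of a vertex of a simplex-subdivision is a subdivision of $\partial\tau * \link_{2^V}(\tau)$. When $\tau = C$ (so $m = 0$) the sum degenerates to $h(\link_\Gamma(v), x)$ for the homology sphere $\link_\Gamma(v)$, which is nonnegative by Cohen-Macaulayness; so one may assume $m \geq 1$.

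With this translation in hand, the argument of \cite[Section~4]{Sta92} applies with only routine modifications. As in the proof of Theorem \ref{thm:symm}, applying Proposition \ref{prop:sirec} to the homology balls and spheres $W_F$ recasts the alternating sum as $\sum_{F \in [\tau, C]}(-x)^{d - \dim(F)}\,h(\inte(W_F), x)$, where $\inte(W_F)$ is the set of faces of $W$ carried exactly by $F$; every summand $h(\inte(W_F), x)$ is nonnegative because $W_F$ is Cohen-Macaulay. The signs $(-x)^{d - \dim(F)}$ are then controlled, exactly as in \cite{Sta92}, by a lemma to the effect that for a locally quasi-geometric subdivision this alternating sum collapses to the $h$-polynomial of a single Cohen-Macaulay complex --- built from the faces of $W$ carried by all of $C$ --- which is therefore nonnegative. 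In the boundary case $k = 0$ no such lemma is needed: $W$ is then a genuine simplicial subdivision of the simplex $2^{[m]}$, and the locally quasi-geometric hypothesis on $\Gamma$, applied to the edges through $v$ of a face of $\Gamma$, translates into exactly Stanley's quasi-geometric hypothesis for $W$, so \eqref{eq:thmnonnv} is \cite[Theorem~4.6]{Sta92} applied to $W$.

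The hardest point will be the case $0 < k < d$: one must run Stanley's collapsing argument for subdivisions of the cone-like ball $W_\varnothing * 2^{[m]}$ in place of a simplex, and --- the delicate part --- verify that \emph{locally} quasi-geometric is precisely the right hypothesis. It is strictly weaker than requiring that no face of $\Gamma$ have all its vertices in a face of $C$ of smaller dimension (see Remark \ref{rem:quasi}), and it is phrased through the edges through a single vertex precisely because a cube is not recovered from an arbitrary subset of its vertices; one has to check that this edge-formulation is exactly what makes Stanley's key lemma work for the part of $W$ transverse to the sphere $W_\varnothing$. All remaining details are bookkeeping parallel to \cite{Sta92}; the one genuinely cubical ingredient --- needed only to reduce Theorem \ref{thm:nonn} to the vertex-link statement \eqref{eq:thmnonnv} --- is Hetyei's identity \eqref{eq:lemsc}.
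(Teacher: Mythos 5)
Your setup is the same as the paper's: pass to $\Delta=\link_\Gamma(v)$, observe that the faces $F\in\fF(C)$ containing $v$ form the Boolean interval $[\sigma(v),C]$ of rank $e=d-\dim\sigma(v)$, identify the summands $\link_{\Gamma_F}(v)$ with intersections of the subcomplexes coming from the codimension-one faces of $C$ in that interval, and dispose of the extreme case $e=0$ by Cohen--Macaulayness. But the core of the proof is missing. The paper's argument is commutative-algebraic: it chooses a \emph{special} linear system of parameters $\Theta$ for $\KK[\Delta]$ in which only the last $e$ of the $d$ parameters are constrained (each $\Theta_{d-e+i}$ supported on vertices off $\Delta_i$), shows that such a $\Theta$ exists \emph{precisely because} of the locally quasi-geometric hypothesis (no face $F$ of $\Delta$ has all its vertices in $\Delta_S$ for $|S|>d-\dim F$), proves by induction on $r$ that quotienting the natural complex $\KK[\Delta]\to\bigoplus_i\KK[\Delta_i]\to\cdots$ by $(\Theta_1,\dots,\Theta_r)$ preserves exactness, and then reads off the alternating sum as the Hilbert series of the kernel $L_v(\Gamma)$, which is therefore nonnegative. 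You explicitly defer exactly this (``the delicate part,'' ``one has to check that this edge-formulation is exactly what makes Stanley's key lemma work''), so what remains is a plan, not a proof; the interpolation between $e=0$ and $e=d$, with only $e$ special parameters, is the genuinely new content and cannot be dismissed as routine bookkeeping.

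Two further points are off. First, the sum does not collapse to ``the $h$-polynomial of a single Cohen--Macaulay complex built from the faces of $W$ carried by all of $C$'': those faces do not form a subcomplex. The correct object is the Hilbert series of the image in $\KK[\Delta]/(\Theta)$ of the \emph{ideal} generated by the corresponding face monomials (a Gorenstein module, per Remark \ref{rem:modules}); your intermediate rewriting via Proposition \ref{prop:sirec} as $\sum_F(-x)^{d-\dim F}h(\inte(W_F),x)$ is a valid identity but a dead end, since the signs are not controlled by any ``collapsing lemma'' of the kind you invoke. Second, in the case $k=0$ you assert that $W$ is a genuine simplicial subdivision of the simplex $\link_C(v)$; for topological cubical subdivisions the restrictions $\Gamma_F$ are only homeomorphic to balls and their vertex links are only homology balls/spheres, so this identification requires an extra hypothesis (the paper makes it only in its motivating discussion, under the assumption that $\link_\Gamma(v)$ is a topological subdivision of $\link_C(v)$, and avoids it entirely in the actual proof by working ring-theoretically).
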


  \medskip
  \noindent
  \begin{proof}[Proof of Theorem \ref{thm:nonn}] Using (\ref{eq:lemsc}), we
  compute that
    \begin{eqnarray*}
     \ell_C (\Gamma, x) &=& \sum_{F \in \fF(C)} \ (-1)^{d - \dim(F)} \,
     h^{(sc)} (\Gamma_F, x) \\
     & & \\
     &=& \sum_{F \in \fF(C)} \ (-1)^{d - \dim(F)}
     \left( \, \sum_{v \in \vert(\Gamma_F)} h (\link_{\Gamma_F} (v), x)
     \right) \\
     & & \\
     &=& \sum_{v \in \vert(\Gamma)} \ \sum_{F \in \fF(C), v \in F}
        (-1)^{d - \dim(F)} \, h (\link_{\Gamma_F} (v), x). \\
    \end{eqnarray*}
  Thus, the result follows from Theorem \ref{thm:nonnv}.
  \end{proof}

  For the remainder of this section we let $\sigma: \fF(\Gamma) \to P = \fF(C)$ be the
  subdivision map associated with $\Gamma$. We also write $\dim(\sigma (v)) = d-e$,
  where $d = \dim (C)$ and $0 \le e \le d$. To motivate our approach towards
  proving Theorem \ref{thm:nonnv}, we consider two special cases. Suppose first
  that $e=d$, so that $v$ is a vertex of $C$. Assume further that $\link_\Gamma (v)$
  is a topological subdivision of the $(d-1)$-dimensional simplex $\link_C (v)$ (this
  happens, for instance, if $\Gamma$ is a geometric subdivision). Then $\link_\Gamma
  (v)$ is a quasi-geometric subdivision of $\link_C (v)$, in the sense of \cite[Section
  4]{Sta92}, and the left-hand side of (\ref{eq:thmnonnv}) is equal to the simplicial
  local $h$-vector of this subdivision \cite[Definition 2.1]{Sta92}. Therefore, Theorem
  \ref{thm:nonnv} follows in this case from \cite[Corollary 4.7]{Sta92}. On the other
  extreme, if $e=0$, so that $v$ is an interior vertex of $\Gamma$, then the left-hand
  side of (\ref{eq:thmnonnv}) consists of the single term $h (\link_\Gamma (v), x)$.
  This polynomial is nonnegative since $\link_\Gamma (v)$ is Cohen-Macaulay (in fact,
  a homology sphere) over any field. The proof which follows generalizes that in
  \cite[Section 4]{Sta92} and interpolates between these two special cases.

  \medskip
  \noindent
  \begin{proof}[Proof of Theorem \ref{thm:nonnv}]
  We fix a vertex $v$ of $\Gamma$ and set $\Delta = \link_\Gamma (v)$. We observe that
  the poset of faces $F \in \fF(C)$ with $v \in F$ is equal to the closed interval
  $[\sigma(v), C]$ in $\fF(C)$. This interval is a Boolean lattice of rank $e$.
  We denote (in the case $e \ge 1$) by $\Delta_1,
  \Delta_2,\dots,\Delta_e$ the subcomplexes of $\Delta$ of the form $\link_{\Gamma_G}
  (v)$, where $G$ runs through the codimension one faces of $C$ in the interval
  $[\sigma(v), C]$. We also write $\Delta_S = \cap_{i \in S} \, \Delta_i$ for $S
  \subseteq [e] := \{1, 2,\dots,e\}$, where $\Delta_\varnothing = \Delta$. Then the
  subcomplexes $\link_{\Gamma_F} (v)$ of $\Delta$ which appear in the left-hand side
  of (\ref{eq:thmnonnv}) are precisely the complexes $\Delta_S$ for $S \subseteq [e]$.
  Thus (\ref{eq:thmnonnv}) can be rewritten as
    \begin{equation} \label{eq:nonnv}
        \sum_{S \subseteq [e]} \ (-1)^{|S|} \, h (\Delta_S, x) \, \ge \, 0.
      \end{equation}
  Moreover, the complexes $\Gamma_F$ are Cohen-Macaulay over all fields (as they
  are homeomorphic to balls) for $F \in \fF(C)$ and therefore, so are the links of
  their faces and hence the complexes $\Delta_S$, for $S \subseteq [e]$.

  We fix an infinite field $\KK$ and consider the face ring (or Stanley-Reisner ring)
  $\KK[\Delta]$ of $\Delta$; see \cite[Chapter II]{StaCCA}. Given a linear system of
  parameters $\Theta = \{\Theta_1,\dots,\Theta_d\}$ for $\KK[\Delta]$, we set $\KK 
  (\Delta) := \KK[\Delta] / (\Theta)$ and denote by $L_v (\Gamma)$ the image in $\KK
  (\Delta)$ of the ideal of $\KK[\Delta]$ generated by the face monomials $x^F$, where 
  $F \in \Delta$ runs through all faces which do not belong to any of the $\Delta_i$. 
  It follows from \cite[Theorem 4.6]{Sta92} that, in the special case $e=d$, the system
  $\Theta$ can be chosen so that
    \begin{equation} \label{eq:hilbert}
      \sum_{S \subseteq [e]} \, (-1)^{|S|} \, h (\Delta_S, x) \ = \ \sum_{i=0}^d \,
      \dim_\KK L_v (\Gamma)_i \, x^i,
    \end{equation}
  where $L_v (\Gamma)_i$ denotes the homogeneous part of $L_v (\Gamma)$ of degree
  $i$. We note that (\ref{eq:hilbert}) also holds when $e=0$ (for every $\Theta$),
  since then $L_v (\Gamma) = \KK (\Delta)$, the left-hand side of (\ref{eq:hilbert}) 
  is equal to $h (\Delta, x)$ and Cohen-Macaulayness of $\Delta$ over $\KK$ implies
  that $\dim_\KK \KK(\Delta)_i = h_i (\Delta)$ for all $0 \le i \le d$.

  To establish (\ref{eq:nonnv}) in the general case, it suffices to show that 
  $\Theta$ can always be chosen so that (\ref{eq:hilbert}) holds. We call a sequence
  $\{\Theta_1,\dots,\Theta_d\}$ of linear forms on $\KK[\Delta]$ \emph{special} if
  $\Theta_{d-e+i}$ is a linear combination of vertices of $\Delta$ which do not
  belong to $\Delta_i$, for each $1 \le i \le e$. Since $\Gamma$ is locally
  quasi-geometric, there is no face $F$ of $\Delta$ and set $S \subseteq [e]$ of
  cardinality exceeding $d - \dim(F)$, such that all vertices of $F$ belong to
  $\Delta_S$. Thus, by choosing $\{\Theta_1,\dots,\Theta_d\}$ as generically as
  possible, subject to the condition of being special, it follows as in the proof
  of \cite[Corollary 4.4]{Sta92} that a special linear system of parameters for
  $\KK[\Delta]$ exists. We will show that (\ref{eq:hilbert}) holds for such $\Theta$.
  We consider the natural complex of $\KK[\Delta]$-modules
    \begin{equation} \label{eq:complex}
      \KK[\Delta] \to \bigoplus_{i=1}^e \ \KK[\Delta] / N_i \to
      \bigoplus_{1 \le i < j \le e} \KK[\Delta] / (N_i + N_j) \to \cdots \to
      \KK[\Delta] / (N_1 + \cdots + N_e) \to 0,
    \end{equation}
  where $N_i$ is the ideal of $\KK[\Delta]$ generated by all face monomials $x^F$
  for which $F \in \Delta$ does not belong to $\Delta_i$. The complex
  (\ref{eq:complex}) naturally generalizes that in \cite[Lemma 4.9]{Sta92}. Clearly,
  for $1 \le i_1 < \cdots < i_r \le e$ we have
    \begin{equation} \label{eq:iso}
      \KK[\Delta] / (N_{i_1} + \cdots + N_{i_r}) \ \cong \ \KK[\Delta_S]
    \end{equation}
  as $\KK[\Delta]$-modules, where $S = [e] \sm \{i_1,\dots,i_r\}$. Therefore
  (\ref{eq:complex}) can be rewritten as
    \begin{equation} \label{eq:complexS}
      \KK[\Delta] \, \to \, \bigoplus_{i=1}^e \ \KK[\Delta_i] \, \to \cdots \to \,
      \bigoplus_{S \subseteq [e]: \, |S| = k} \, \KK[\Delta_S] \, \to \cdots \to \,
      \KK[\Delta_{[e]}] \, \to 0.
    \end{equation}
  We claim that taking the quotient of (\ref{eq:complexS}) by $(\Theta)$ produces an 
  exact complex, namely
    \begin{equation} \label{eq:complexT}
      \KK(\Delta) \, \to \, \bigoplus_{i=1}^e \ \KK(\Delta_i) \, \to \cdots \to \,
      \bigoplus_{S \subseteq [e]: \, |S| = k} \, \KK(\Delta_S) \, \to \cdots \to \,
      \KK(\Delta_{[e]}) \, \to 0,
    \end{equation}
  where $\KK(\Delta_S) := \KK[\Delta_S] / (\Theta)$ for $S \subseteq [e]$. Given the
  claim, the proof concludes as follows. Except for the irrelevant terms $\Theta_i$
  which annihilate $\KK[\Delta_S]$ (specifically, those with $i \in S$), the
  sequence $\Theta$ is a linear system of parameters for $\KK[\Delta_S]$. Since each
  complex $\Delta_S$ is Cohen-Macaulay over $\KK$, we have
    $$ \sum_{i=0}^d \, \dim_\KK \KK(\Delta_S)_i \, x^i \ = \ h (\Delta_S, x) $$
  for every $S \subseteq [e]$. Moreover, if $\delta$ is the leftmost map in
  (\ref{eq:complexT}), then $\ker (\delta) = L_v (\Gamma)$ and hence (\ref{eq:hilbert})
  follows by computing the Hilbert series of $\ker (\delta)$ from the exact sequence
  (\ref{eq:complexT}).

  To prove the claim we show by induction on $r$, as in the proof of \cite[Lemma
  4.9]{Sta92}, that the sequence
    \begin{equation} \label{eq:complexU}
      \overline{\KK[\Delta]} \, \to \, \bigoplus_{i=1}^e \ \overline{\KK[\Delta_i]} \,
      \to \cdots \to \, \bigoplus_{S \subseteq [e]: \, |S| = k} \, \overline{\KK[\Delta_S]}
      \, \to \cdots \to \, \overline{\KK[\Delta_{[e]}]} \, \to 0,
    \end{equation}
  obtained by taking the quotient of (\ref{eq:complexS}) by the ideal 
  $(\Theta_1,\dots,\Theta_r)$, is exact. For $r=0$, this means that the sequence
  (\ref{eq:complexS}) itself is exact and the argument given on \cite[p.~817]{Sta92}
  works (with the understanding that the dimension of the simplex $\Sigma$,
  mentioned there, is one less than the number of indices $1 \le i \le e$ with
  $F \in \Delta_i$). For the inductive step, we distinguish two cases. First, if $0
  \le r < d-e$, then $\Theta_{r+1}$ is a nonzero-divisor on each term of
  (\ref{eq:complexU}) and hence all kernels $B^k$ of the maps which are given by
  multiplication with $\Theta_{r+1}$ on these terms are identically zero. Otherwise,
  we write $r+1 = d-e+j$ for some $1 \le j \le e$ and note that $\Theta_{r+1}$ either
  annihilates $\overline{\KK[\Delta_S]}$ or is a nonzero-divisor on it, depending on
  whether $j \in S$ or not. Thus we have
    $$ B^k \ = \ \bigoplus_{\scriptsize \begin{array}{c} S \subseteq [e]: |S| = k,
       \\ j \in S \end{array}} \, \overline{\KK[\Delta_S]} $$
  for $0 \le k \le d$ (for instance, $B^0 = 0$ is the kernel of $\Theta_{r+1}$ on
  $\overline{\KK[\Delta]}$) and the argument given in the proof of \cite[Lemma
  4.9]{Sta92} to show that $B^1 \to \cdots \to B^d \to 0$ is exact goes through, as
  does the last step of the proof, completing the induction.
  \end{proof}

  \begin{remark} \label{rem:modules}
    Using (\ref{eq:lemsc}), one can show as in the proof of Theorem \ref{thm:symm}
    that each contribution (\ref{eq:thmnonnv}) to $\ell_C (\Gamma, x)$ is a polynomial
    with symmetric coefficients. Moreover, it follows as in \cite[Corollary
    4.19]{Sta92} that $L_v (\Gamma)$ is a Gorenstein $\KK[\Delta]$-module for every
    vertex $v$ of $\Gamma$. It might be worth studying further properties and
    examples of the modules $L_v (\Gamma)$. \qed
  \end{remark}

  \begin{remark} \label{rem:regular}
    It was shown by Stanley \cite[Theorem 5.2]{Sta92} that the simplicial local
    $h$-vector is unimodal for every \emph{regular} (hence geometric) triangulation
    of a simplex. It is plausible that the techniques of \cite[Section 5]{Sta92} can
    be applied to show that if $\Gamma$ is a regular cubical subdivision of the
    geometric cube $C$, then each contribution (\ref{eq:thmnonnv}) to $\ell_C
    (\Gamma, x)$ is unimodal. This would provide an affirmative answer to Question
    \ref{que:unimodal} in the special case of regular cubical subdivisions. \qed
  \end{remark}

  \section{The long cubical local $h$-vector}
  \label{sec:long}

  This section defines (long) cubical local $h$-vectors, studies their elementary
  properties and lists some open questions related to them.

  \begin{definition} \label{def:long}
    Let $C$ be a $d$-dimensional cube. For any cubical subdivision $\Gamma$ of
    $C$, we define a polynomial $\LR_C (\Gamma, x) = \LR_0 + \LR_1 x + \cdots +
    \LR_{d+1} x^{d+1}$ by
      \begin{equation} \label{eq:deflong}
        \LR_C (\Gamma, x) \ = \sum_{F \in \fF(C)} \ (-1)^{d - \dim(F)} \, h^{(c)}
        (\Gamma_F, x).
      \end{equation}
    We call $\LR_C (\Gamma, x)$ the (long) \emph{cubical local $h$-polynomial} of
    $\Gamma$ (with respect to $C$). We call $\LR_C (\Gamma) = (\LR_0,
    \LR_1,\dots,\LR_{d+1})$ the (long) \emph{cubical local $h$-vector} of $\Gamma$
    (with respect to $C$).
  \end{definition}

  \begin{remark} \label{rem:long}
    (i) It follows from (\ref{eq:hd+1}) that the coefficient $\LR_{d+1}$ of $x^{d+1}$
    in the right-hand side of (\ref{eq:deflong}) is equal to zero. Hence the degree of
    $\LR_C (\Gamma, x)$ cannot exceed $d$.

    (ii) Suppose that $d \ge 1$. It follows from (\ref{eq:h0}) that the constant
    term $\LR_0$ of the right-hand side of (\ref{eq:deflong}) is equal to zero.
    Hence $\LR_C (\Gamma, x)$ is divisible by $x$. \qed
  \end{remark}

  The two kinds of cubical local $h$-vectors we have defined are related in a simple
  way, as the following proposition shows.
    \begin{proposition} \label{prop:long}
      For every cubical subdivision $\Gamma$ of a cube $C$ of dimension $d \ge 1$
      we have
        \begin{equation} \label{eq:proplong}
          x \, \ell_C (\Gamma, x) \ = \ (x+1) \, \LR_C (\Gamma, x).
        \end{equation}
      Thus we have $\LR_0 = \LR_{d+1} = 0$ and $\LR_{i+1} = \ell_i - \ell_{i-1} +
      \cdots + (-1)^i \ell_0$ for $0 \le i \le d-1$, where $\LR_C (\Gamma) = (\LR_0,
      \LR_1,\dots,\LR_{d+1})$ and $\ell_C (\Gamma) = (\ell_0, \ell_1,\dots,\ell_d)$.
    \end{proposition}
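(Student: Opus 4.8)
The plan is to derive equation (\ref{eq:proplong}) directly from the definitions of the two cubical local $h$-polynomials by substituting the defining relation (\ref{eq:defc}) into (\ref{eq:deflong}). First I would multiply the defining equation (\ref{eq:deflong}) by $x+1$ and replace $(x+1) h^{(c)} (\Gamma_F, x)$, for each $F \in \fF(C)$, by the right-hand side of (\ref{eq:defc}) applied to $\KR = \Gamma_F$; note that $\Gamma_F$ has dimension $\dim(F)$, so for $F$ of dimension $k$ this reads $2^k + x\, h^{(sc)} (\Gamma_F, x) + (-2)^k\, \widetilde{\chi}(\Gamma_F)\, x^{k+2}$. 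Since each $\Gamma_F$ is homeomorphic to a ball of dimension $\dim(F)$, we have $\widetilde{\chi}(\Gamma_F) = 0$, so the third term vanishes identically and we are left with
  $$ (x+1) \, \LR_C (\Gamma, x) \ = \ \sum_{F \in \fF(C)} (-1)^{d-\dim(F)} \, 2^{\dim(F)} \ + \ x \sum_{F \in \fF(C)} (-1)^{d-\dim(F)} \, h^{(sc)} (\Gamma_F, x). $$
The second sum on the right is exactly $x\, \ell_C (\Gamma, x)$ by (\ref{eq:defshort}), so it remains to show that the first sum vanishes when $d \ge 1$.

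That first sum is $\sum_{k=0}^{d} (-1)^{d-k}\, 2^k\, \#\{F \in \fF(C): \dim(F) = k\}$. Since $C$ is a $d$-dimensional cube, the number of $k$-dimensional faces is $2^{d-k}\binom{d}{k}$, so the sum equals $\sum_{k=0}^{d} (-1)^{d-k}\, 2^k\, 2^{d-k}\binom{d}{k} = 2^d \sum_{k=0}^{d} (-1)^{d-k} \binom{d}{k} = 2^d (1-1)^d$, which is $0$ for $d \ge 1$ and $1$ for $d=0$. (Alternatively, one can observe that $\sum_{F \in \fF(C)} (-1)^{d-\dim(F)} 2^{\dim(F)} = (-1)^d h^{(sc)}(C, -1)$ via the trivial-subdivision computation in Example \ref{ex:short}(a), or simply quote that computation directly, where $\ell_C(C,x) = 0$ for $d\ge 1$ combined with the fact that the trivial subdivision has $h^{(c)}(C_F, x)$ accounting for the same cancellation.) This establishes (\ref{eq:proplong}).

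For the consequences, from $x\,\ell_C(\Gamma,x) = (x+1)\LR_C(\Gamma,x)$ and the fact (Remark \ref{rem:long}) that $\LR_0 = \LR_{d+1} = 0$ — or, more self-containedly, by comparing constant terms and top-degree terms on both sides — we extract the stated recursion. Writing $\ell_C(\Gamma,x) = \sum_{i=0}^d \ell_i x^i$ and $\LR_C(\Gamma,x) = \sum_{j=0}^{d+1}\LR_j x^j$, comparing the coefficient of $x^{i+1}$ on both sides of (\ref{eq:proplong}) gives $\ell_i = \LR_{i+1} + \LR_i$ for $0 \le i \le d$, i.e. $\LR_{i+1} = \ell_i - \LR_i$. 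Unwinding this recursion from the base case $\LR_0 = 0$ yields $\LR_{i+1} = \ell_i - \ell_{i-1} + \cdots + (-1)^i \ell_0$ for $0 \le i \le d-1$ by an immediate induction on $i$; and taking $i = d$ gives $\LR_{d+1} = \ell_d - \ell_{d-1} + \cdots + (-1)^d \ell_0$, which is consistent with $\LR_{d+1} = 0$ precisely because of the symmetry $\ell_i = \ell_{d-i}$ of Theorem \ref{thm:symm} (though one can also just cite Remark \ref{rem:long}(i) for $\LR_{d+1} = 0$).

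I do not expect any serious obstacle here: the entire argument is a substitution followed by the vanishing of a reduced Euler characteristic (because each $\Gamma_F$ is a ball) and the vanishing of the binomial sum $(1-1)^d$ for $d \ge 1$, and then a one-line induction. The only point requiring a moment's care is making sure that the dimension indexing in (\ref{eq:defc}) is applied with $d$ replaced by $\dim(F)$ for each restriction $\Gamma_F$, rather than the ambient dimension $d$ of $C$; once that is handled correctly the cancellations are automatic.
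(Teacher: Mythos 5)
Your proposal is correct and follows essentially the same route as the paper: multiply (\ref{eq:deflong}) by $x+1$, substitute (\ref{eq:defc}) applied to each $\Gamma_F$ with $\widetilde{\chi}(\Gamma_F)=0$, and observe that the remaining constant sum $\sum_{F}(-1)^{d-\dim(F)}2^{\dim(F)}=2^d(1-1)^d$ vanishes for $d\ge 1$. The paper leaves this last cancellation and the coefficient recursion implicit, but your added details are accurate.
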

    \begin{proof}
      Since $\widetilde{\chi} (\Gamma_F) = 0$ for $F \in \fF(C)$, applying
      (\ref{eq:defc}) to $\Gamma_F$ we get
        $$ (x+1) \, h^{(c)} (\Gamma_F, x) \ = \ 2^{\dim(F)} + x h^{(sc)} (\Gamma_F,
           x). $$
      The result follows by multiplying (\ref{eq:deflong}) by $x+1$ and using the
      previous equality.
    \end{proof}

  The statements in the next example follow from Proposition \ref{prop:long} and the
  computations in Examples \ref{ex:short} and \ref{ex:shortell}.

    \begin{example} \label{ex:long}
      (a) For the trivial subdivision of the $d$-dimensional cube $C$, the cubical
      local $h$-polynomial is equal to 1, if $d=0$, and vanishes otherwise. For a
      general cubical subdivision $\Gamma$ of $C$ we have $\LR_C (\Gamma, x) = tx$,
      if $d=1$, and $\LR_C (\Gamma, x) = tx(x+1)$, if $d=2$, where $t$ is the number
      of interior vertices of $\Gamma$. For the subdivision in part (d) of Example
      \ref{ex:short} we have $\LR_C (\Gamma, x) = 2^d (x + x^2 + \cdots + x^d)$.

      (b) Let $\LR_C (\Gamma) = (\LR_0, \LR_1,\dots,\LR_{d+1})$, where $\Gamma$ and
       $C$ are as in Definition \ref{def:long}. Then $\LR_1 = \LR_d$ is equal to the
       number of interior vertices of $\Gamma$ and $\LR_2 = -(d+1) f_0^\circ + 2
       f_1^\circ - \tilde{f}_0$, where the notation is as in part (b) of Example
       \ref{ex:shortell}.  \qed
    \end{example}

    \begin{corollary} \label{cor:symm}
      For every cubical subdivision $\Gamma$ of a cube $C$ of dimension $d \ge 1$
      we have
        \begin{equation} \label{eq:corsymm}
          x^{d+1} \, \LR_C (\Gamma, 1/x) \ = \ \LR_C (\Gamma, x).
        \end{equation}
      Equivalently, we have $\LR_i = \LR_{d+1-i}$ for $0 \le i \le d+1$, where
      $\LR_C (\Gamma) = (\LR_0, \LR_1,\dots,\LR_{d+1})$.
    \end{corollary}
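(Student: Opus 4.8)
The plan is to deduce (\ref{eq:corsymm}) formally from the symmetry of the short cubical local $h$-polynomial (Theorem~\ref{thm:symm}) and the relation $x\,\ell_C(\Gamma, x) = (x+1)\,\LR_C(\Gamma, x)$ of Proposition~\ref{prop:long}, working in the ring of Laurent polynomials in $x$. First I would substitute $1/x$ for $x$ in (\ref{eq:proplong}); after clearing the common factor $x$ this reads $\ell_C(\Gamma, 1/x) = (x+1)\,\LR_C(\Gamma, 1/x)$. Multiplying this identity by $x^{d+1}$ and using $x^d\,\ell_C(\Gamma, 1/x) = \ell_C(\Gamma, x)$ from (\ref{eq:thmsymm}), the left-hand side becomes $x\,\ell_C(\Gamma, x)$, so I obtain $x\,\ell_C(\Gamma, x) = (x+1)\,x^{d+1}\,\LR_C(\Gamma, 1/x)$.

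Now I would compare this with Proposition~\ref{prop:long} once more, which gives $x\,\ell_C(\Gamma, x) = (x+1)\,\LR_C(\Gamma, x)$. Equating the two expressions for $x\,\ell_C(\Gamma, x)$ yields $(x+1)\,\LR_C(\Gamma, x) = (x+1)\,x^{d+1}\,\LR_C(\Gamma, 1/x)$, and since $x+1$ is a nonzero element of the integral domain of Laurent polynomials it can be cancelled, giving exactly (\ref{eq:corsymm}). The equivalent coefficientwise statement $\LR_i = \LR_{d+1-i}$ then follows by reading off coefficients, Remark~\ref{rem:long} guaranteeing that $\LR_0 = \LR_{d+1} = 0$ and hence that $x^{d+1}\,\LR_C(\Gamma, 1/x) = \sum_i \LR_i\, x^{d+1-i}$ is a genuine polynomial.

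I expect no real obstacle here: the argument is a two-line manipulation of identities already in hand. The only point meriting care is to carry out the substitution $x \mapsto 1/x$ and the cancellation of $x+1$ inside the Laurent polynomial ring rather than the polynomial ring, so that both steps are legitimate. Alternatively, one can bypass Laurent polynomials entirely and verify $\LR_i = \LR_{d+1-i}$ directly from the coefficient formula $\LR_{i+1} = \ell_i - \ell_{i-1} + \cdots + (-1)^i \ell_0$ in Proposition~\ref{prop:long}, using $\ell_i = \ell_{d-i}$ (Theorem~\ref{thm:symm}); the resulting identity reduces, after telescoping, to $\ell_C(\Gamma, -1) = 0$, which holds by Remark~\ref{rem:x+1}.
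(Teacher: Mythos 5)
Your argument is correct and is exactly what the paper intends: its proof of this corollary is the one-line citation of Theorem \ref{thm:symm} and Proposition \ref{prop:long}, and your manipulation (substituting $x \mapsto 1/x$ in (\ref{eq:proplong}), multiplying by $x^{d+1}$, applying (\ref{eq:thmsymm}), and cancelling $x+1$) is the standard way to carry that out.
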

    \begin{proof}
      This follows from Theorem \ref{thm:symm} and Proposition \ref{prop:long}.
    \end{proof}

  The following statement is the analogue of Theorem \ref{thm:local} for
  the long cubical $h$-vector.

  \begin{theorem} \label{thm:clocal}
    Let $\KR$ be a pure cubical complex. For every cubical subdivision $\KR'$
    of $\KR$ we have
      \begin{equation} \label{eq:thmclocal}
        h^{(c)} (\KR', x) \ = \ h^{(c)} (\KR, x) \ + \sum_{F \in \KR: \, \dim(F)
        \ge 1} \LR_F (\KR'_F, x) \, h (\link_\KR (F), x).
      \end{equation}
  \end{theorem}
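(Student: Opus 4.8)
The plan is to reduce Theorem~\ref{thm:clocal} to the already-established short version (Theorem~\ref{thm:local}) by exploiting the defining relation (\ref{eq:defc}) between $h^{(c)}$ and $h^{(sc)}$, together with the bridge identity (\ref{eq:proplong}) between $\LR_C(\Gamma,x)$ and $\ell_C(\Gamma,x)$. First I would multiply both sides of the desired identity (\ref{eq:thmclocal}) by $(x+1)$, so that every occurrence of $h^{(c)}$ can be replaced using (\ref{eq:defc}): for a $d$-dimensional complex $\KR$ (and likewise $\KR'$), $(x+1)h^{(c)}(\KR,x) = 2^d + x\,h^{(sc)}(\KR,x) + (-2)^d\widetilde{\chi}(\KR)\,x^{d+2}$. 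Since $\KR'$ subdivides $\KR$, the two complexes have the same underlying space and hence the same reduced Euler characteristic, so the $x^{d+2}$-terms on the two sides of the multiplied identity cancel against each other. Similarly, $(x+1)\LR_F(\KR'_F,x) = x\,\ell_F(\KR'_F,x)$ by Proposition~\ref{prop:long} (valid since $\dim(F)\ge 1$ in the sum).

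The next step is bookkeeping on the constant terms $2^d$. After the substitutions, the multiplied identity becomes
\begin{equation*}
  2^d + x\,h^{(sc)}(\KR',x) \ = \ 2^d + x\,h^{(sc)}(\KR,x) + \sum_{F \in \KR:\,\dim(F)\ge 1} x\,\ell_F(\KR'_F,x)\,h(\link_\KR(F),x),
\end{equation*}
where I have used that $\dim(\KR')=\dim(\KR)=d$. The $2^d$'s cancel, and dividing through by $x$ leaves exactly
\begin{equation*}
  h^{(sc)}(\KR',x) \ = \ h^{(sc)}(\KR,x) + \sum_{F \in \KR:\,\dim(F)\ge 1} \ell_F(\KR'_F,x)\,h(\link_\KR(F),x),
\end{equation*}
which is precisely equation (\ref{eq:thmlocaltwo}) — the rewritten form of Theorem~\ref{thm:local} obtained from (\ref{eq:lemsc}) and the observation that $\ell_F(\KR'_F,x)=0$ for $\dim(F)=0$ (Example~\ref{ex:short}(a), since the restriction to a vertex is trivial). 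Thus the multiplied-and-divided identity holds, and since $x+1$ and $x$ are nonzero polynomials, dividing back recovers (\ref{eq:thmclocal}).

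The one point that requires a little care — and is the only genuine obstacle — is the legitimacy of dividing by $x$ at the end: this needs the constant terms on both sides of the pre-division identity to actually agree, which is why the explicit values (\ref{eq:h0}) of $h^{(c)}_0$ and Remark~\ref{rem:long}(ii) (namely $\LR_0 = 0$, so $\LR_F(\KR'_F,x)$ contributes nothing constant) are invoked; one must also check $h^{(c)}(\KR',x)$ and $h^{(c)}(\KR,x)$ have the same constant term $2^d$, which is immediate from (\ref{eq:h0}) once one knows $\dim(\KR')=\dim(\KR)$. I would write the argument so that the cancellation of the $2^d$ terms, the cancellation of the Euler-characteristic terms, and the term-by-term application of Proposition~\ref{prop:long} are displayed in one aligned computation, then conclude by citing Theorem~\ref{thm:local} in the form (\ref{eq:thmlocaltwo}).
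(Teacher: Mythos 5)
Your argument is essentially the paper's own proof run in reverse: both reduce (\ref{eq:thmclocal}) to Theorem \ref{thm:local} via the relation (\ref{eq:defc}), the identity $(x+1)\,\LR_F(\KR'_F,x)=x\,\ell_F(\KR'_F,x)$ of Proposition \ref{prop:long}, and the equality $\widetilde{\chi}(\KR)=\widetilde{\chi}(\KR')$, and the computation is correct. One slip in your justification of the intermediate form (\ref{eq:thmlocaltwo}): for a zero-dimensional face $F$ the restriction $\KR'_F$ is the trivial subdivision of a point, so $\ell_F(\KR'_F,x)=1$, not $0$ (Example \ref{ex:short}(a) with $d=0$); it is precisely this value, combined with (\ref{eq:lemsc}), that turns the vertex terms of (\ref{eq:thmlocal}) into the summand $h^{(sc)}(\KR,x)$ — if those terms vanished, (\ref{eq:thmlocaltwo}) would not follow. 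Since (\ref{eq:thmlocaltwo}) is already established, the rest of your reduction goes through unchanged.
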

  \begin{proof}
    We multiply (\ref{eq:thmlocal}) by $x$ and use Proposition \ref{prop:long},
    equation (\ref{eq:lemsc}) and the fact that $\ell_F (\KR'_F, x) = 1$ for
    each zero-dimensional face $F \in \KR$ to get
      \begin{eqnarray*}
      x \, h^{(sc)} (\KR', x) &=& (x+1) \sum_{F \in \KR: \, \dim(F) \ge 1}
      \LR_F (\KR'_F, x) \, h (\link_\KR (F), x) \ + \ x \sum_{v \in \vert(\KR)}
      h (\link_\KR (v), x) \\
      & & \\
      &=& (x+1) \sum_{F \in \KR: \, \dim(F) \ge 1} \LR_F (\KR'_F, x) \, h (\link_\KR
          (F), x) \ + \ x \, h^{(sc)} (\KR, x).
      \end{eqnarray*}
    Applying (\ref{eq:defc}) to $\KR$ and $\KR'$ and noting that $\widetilde{\chi}
    (\KR) = \widetilde{\chi} (\KR')$ we get
      $$  (x+1) \, h^{(c)} (\KR', x) \, - \, x \, h^{(sc)} (\KR', x) \ = \
          (x+1) \, h^{(c)} (\KR, x) \, - \, x \, h^{(sc)} (\KR, x). $$
    The result follows by adding the previous two equalities and dividing by
    $x+1$.
  \end{proof}

  \begin{example} \label{ex:longstellar}
    For the cubical stellar subdivision $\KR'$ of Example \ref{ex:shortstellar} we
    have $h^{(c)} (\KR', x) = h^{(c)} (\KR, x) + 2^d (x + x^2 + \cdots + x^d)$. \qed
  \end{example}

  The following are the main open questions on cubical local $h$-vectors. In view
  of Theorem \ref{thm:clocal}, a positive answer to the first question would imply
  a positive answer to the third one for locally quasi-geometric subdivisions. Since
  we have $\LR_C (\Gamma, x) = -4x^2$ for the subdivision in part (e) of Example
  \ref{ex:short}, the first question has a negative answer for general
  subdivisions.

  \begin{question} \label{que:cnonn}
    Does $\LR_C (\Gamma, x) \ge 0$ hold for every locally quasi-geometric cubical
    subdivision $\Gamma$ of a cube $C$?
  \end{question}

  \begin{question} \label{que:cunimodal}
    Is $\LR_C (\Gamma)$ unimodal for every locally quasi-geometric cubical
    subdivision $\Gamma$ of a cube $C$?
  \end{question}

  \begin{question} \label{que:cmonotone}
    Does $h^{(c)} (\KR', x) \ge h^{(c)} (\KR, x)$ hold for every cubical subdivision
    $\KR'$ of a Buchsbaum cubical complex $\KR$?
  \end{question}

  Example \ref{ex:long} and Theorem \ref{thm:clocal} make it clear that Question
  \ref{que:cmonotone} has an affirmative answer for any cubical subdivision $\KR'$ of
  any cubical complex $\KR$ of dimension at most two. A partial result related to
  Question \ref{que:cnonn} is the fact (which follows from Theorem \ref{thm:nonn}
  and Proposition \ref{prop:long}) that $(x+1) \, \LR_C (\Gamma, x) \ge 0$ holds for
  every locally quasi-geometric cubical subdivision $\Gamma$ of a cube $C$.

  \section{Generalizations}
  \label{sec:gen}

  This section explains how the theory of short cubical local $h$-vectors can be
  extended to more general types of subdivisions, in the spirit of \cite[Part
  II]{Sta92}, namely to subdivisions of locally Eulerian posets. Basic examples
  are provided by CW-regular subdivisions of a family of complexes which includes
  all regular CW-complexes. The motivating special case is that of a cubical regular
  CW-complex which subdivides a cube and does not necessarily have the intersection
  property. As a byproduct of this generalization, it is shown that the theory of
  \cite[Part I]{Sta92} applies to other types of $h$-vectors of simplicial complexes,
  such as the short simplicial $h$-vector of \cite{HN02} and certain natural
  generalizations.

  Throughout this section, we will assume familiarity with Sections 6 and 7 of
  \cite{Sta92}. The proofs of the results in this section are straightforward
  generalizations of proofs of corresponding results in \cite{Sta92} and thus, most
  of them will be omitted.

  \subsection{Formal subdivisions} \label{subsec:formal}
  We will call a poset $P$ \emph{lower graded} if the principal order ideal $P_{\le
  t} = \{s \in P: s \le_P t\}$ is finite and graded for every $t \in P$. Contrary to
  the convention of \cite{Sta92}, such a poset need not have a minimum element. We
  denote by $\rho (t)$ the rank (common length of all maximal chains) of $P_{\le t}$.
  Our primary example of a lower graded poset will be the poset of nonempty faces of
  a (cubical) regular CW-complex.

  As in \cite[Section 6]{Sta92}, we will work with the space $\KK[x]^P$ of
  functions $f: P \to \KK[x]$, where $\KK$ is a fixed field, and with the incidence
  algebra $I(P)$ of functions $g: \Inte(P) \to \KK[x]$, where $\Inte(P)$ stands
  for the set of all (nonempty) closed intervals of $P$. We will write $f_t (x)$ and
  $g_{st} (x)$ for the value of $f \in \KK[x]^P$ and $g \in I(P)$, respectively, on
  an element $t \in P$ and interval $[s, t] \in \Inte(P)$. The (convolution) product
  on $I(P)$ is defined whenever $P$ is locally finite (meaning that each element of
  $\Inte(P)$ is finite). For such $P$, a function $\kappa \in I(P)$ is called
  \emph{unitary} if $\kappa_{tt} = 1$ for every $t \in P$.

  Assuming that $P$ is lower graded, we write, as in \cite{Sta92},
    $$ \overline{f}_t (x) \ = \ x^{\rho(t)} f_t (1/x), \ \ \ \ t \in P $$
  for every $f \in \KK[x]^P$ for which $\deg f_t (x) \le \rho(t)$ holds for all $t \in
  P$. Similarly, assuming that $P$ is locally graded (meaning that each element of
  $\Inte(P)$ is finite and graded), we write $\overline{g}_{st} (x)
  = x^{\rho(s, t)} g_{st} (1/x)$ for every $g \in I(P)$ for which $\deg g_{st} (x) \le
  \rho(s, t)$ holds for all $[s, t] \in \Inte(P)$, where $\rho(s, t)$ denotes the rank
  of the interval $[s, t]$. The following definition is \cite[Definition 6.2]{Sta92},
  where the assumption that $P$ has a minimum element has been removed from part (a).

  \begin{definition} \label{def:accept}
    Let $P$ be a locally graded poset and $\kappa \in I(P)$ be unitary.
     \begin{itemize}
       \item[(a)] Assume that $P$ is lower graded. A function $f: P \to \KK[x]$ is
       called \emph{$\kappa$-acceptable} if $f \kappa = \overline{f}$, i.e., if
         \begin{equation} \label{eq:accept1}
           \sum_{s \le_P \, t} \ f_s (x) \kappa_{st} (x) \ = \ \overline{f}_t (x)
         \end{equation}
       for every $t \in P$.
       \item[(b)] A function $g \in I(P)$ is called \emph{$\kappa$-totally acceptable}
       if $g \kappa = \overline{g}$, i.e., if
        \begin{equation} \label{eq:accept2}
          \sum_{s \le_P \, t \le_P \, u} \ g_{st} (x) \kappa_{tu} (x) \ = \
          \overline{g}_{su} (x)
        \end{equation}
      for all $s \le_P t$.
      \end{itemize}
  \end{definition}

  Given a locally finite poset $P$, we recall (see \cite[Theorem 6.5]{Sta92}) that a
  unitary function $\kappa \in I(P)$ is a \emph{$P$-kernel} if and only if
  $\overline{\kappa} = \kappa^{-1}$. Part (a) of the following statement generalizes
  part (a) of \cite[Corollary 6.7]{Sta92}.

    \begin{proposition} \label{prop:uaccept}
      Let $P$ be a locally graded poset with $P$-kernel $\kappa$.
        \begin{itemize}
        \item[(a)] Assume that $P$ is lower graded. Then there exists a unique
        $\kappa$-acceptable function $\gamma = \gamma(P, \kappa): P \to \KK[x]$
        satisfying: {\rm (i)} $\gamma_a (x) = 1$ for every minimal element $a \in P$,
        and {\rm (ii)} $\deg \gamma_t (x) \le \lfloor (\rho(t) - 1) / 2 \rfloor$
        for every $t \in P$ with $\rho (t) \ge 1$.
        \item[(b)] {\rm (\cite[Corollary 6.7 (b)]{Sta92})} There exists a unique
        $\kappa$-totally acceptable function $\xi = \xi(P, \kappa) \in I(P)$ satisfying:
        {\rm (i)} $\xi_{tt} (x) = 1$ for every $t \in P$, and {\rm (ii)} $\deg \xi_{st}
        (x) \le \lfloor (\rho(s, t) - 1) / 2 \rfloor$ for all $s <_P t$.
        \item[(c)] Assume that $P$ is lower graded. Then the functions $\gamma =
        \gamma(P, \kappa)$ and $\xi = \xi(P, \kappa)$ are related by the equation
          \begin{equation} \label{eq:propuaccept}
            \gamma_t (x) \ = \ \sum_{a \in \min (P_{\le t})} \ \xi_{at} (x)
          \end{equation}
        for $t \in P$, where $\min(P_{\le t})$ stands for the set of minimal elements
        of $P_{\le t}$.
        \end{itemize}
   \end{proposition}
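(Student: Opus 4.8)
The plan is to mirror the proof of \cite[Corollary 6.7]{Sta92}, treating parts (a) and (c) as the genuinely new content and part (b) as a verbatim citation. For part (b) one simply invokes \cite[Corollary 6.7 (b)]{Sta92}, since that statement makes no use of a minimum element in $P$ and only concerns intervals $[s,t] \in \Inte(P)$, all of which are finite and graded by the hypothesis that $P$ is locally graded. For part (a), the idea is to reduce the existence-and-uniqueness question on the (possibly minimum-free) poset $P$ to the same question on each principal order ideal $P_{\le t}$, which \emph{does} have a minimum (or a finite set of minimal elements) and is graded. Concretely, I would first observe that equation (\ref{eq:accept1}), evaluated at a given $t \in P$, together with the degree constraint (ii), is a condition that involves only $\gamma_s(x)$ for $s \le_P t$ and the values $\kappa_{st}(x)$ on $\Inte(P_{\le t})$. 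So the $\kappa$-acceptable function $\gamma$ is forced, inductively on $\rho(t)$: the base case $\rho(t) = 0$ is condition (i) (a minimal element $a$ has $P_{\le a} = \{a\}$, so $\overline{\gamma}_a = \gamma_a = 1$, forced), and for $\rho(t) \ge 1$ one solves (\ref{eq:accept1}) for $\gamma_t(x)$ in terms of the already-determined lower values, checking that the $P$-kernel property $\overline{\kappa} = \kappa^{-1}$ forces the solution to satisfy the degree bound $\deg \gamma_t(x) \le \lfloor (\rho(t)-1)/2 \rfloor$. This last verification is exactly the computation carried out in the proof of \cite[Theorem 6.3]{Sta92} (or \cite[Corollary 6.7 (a)]{Sta92}); I would state that it goes through unchanged because it is purely local to the interval structure below $t$, which is unaffected by whether $P$ has a global minimum.

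For part (c), the plan is to compare the two recursions. By part (b), $\xi = \xi(P,\kappa)$ restricted to intervals inside $P_{\le t}$ coincides with $\xi(P_{\le t}, \kappa|_{P_{\le t}})$, since the defining conditions (\ref{eq:accept2}) and the degree bounds for $\xi$ only see intervals. Now fix $t$ and define $\gamma'_t(x) := \sum_{a \in \min(P_{\le t})} \xi_{at}(x)$. I would verify directly that the function $t \mapsto \gamma'_t(x)$ is $\kappa$-acceptable and satisfies conditions (i) and (ii), whence by the uniqueness clause of part (a) it must equal $\gamma_t(x)$. Checking (\ref{eq:accept1}) for $\gamma'$ amounts to the computation
\[
\sum_{s \le_P t} \gamma'_s(x)\, \kappa_{st}(x)
= \sum_{s \le_P t} \ \sum_{a \in \min(P_{\le s})} \xi_{as}(x)\, \kappa_{st}(x)
= \sum_{a \in \min(P_{\le t})} \ \sum_{a \le_P s \le_P t} \xi_{as}(x)\, \kappa_{st}(x),
\]
where in the last step one uses that $\min(P_{\le s}) \subseteq \min(P_{\le t})$ for $s \le_P t$ (a minimal element below $s$ is minimal in $P_{\le t}$), together with Fubini on the double sum. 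The inner sum is $\overline{\xi}_{at}(x)$ by $\kappa$-total acceptability of $\xi$ (equation (\ref{eq:accept2})), and since $\overline{\xi}_{at}(x) = x^{\rho(a,t)} \xi_{at}(1/x)$ and $\rho(a,t) = \rho(t)$ for $a$ a minimal element of $P_{\le t}$ (all maximal chains of $P_{\le t}$ have length $\rho(t)$ and pass through some such $a$), summing over $a$ gives $x^{\rho(t)} \gamma'_t(1/x) = \overline{\gamma'}_t(x)$, as required. Condition (i) for $\gamma'$ is immediate ($\min(P_{\le a}) = \{a\}$, $\xi_{aa} = 1$), and condition (ii) follows from the degree bound $\deg \xi_{at}(x) \le \lfloor (\rho(a,t)-1)/2 \rfloor = \lfloor(\rho(t)-1)/2\rfloor$ in part (b).

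The main obstacle, such as it is, is bookkeeping rather than conceptual: one must be careful that $\rho(a,t) = \rho(t)$ for \emph{every} minimal element $a$ of $P_{\le t}$, which uses gradedness of $P_{\le t}$ (all maximal chains have the same length $\rho(t)$, and each such chain starts at some minimal element, but a priori different minimal elements could sit at different heights). Gradedness of $P_{\le t}$ rules this out precisely because it forces every minimal element to have rank $0$. A second minor point is to confirm that the uniqueness in part (a) is strong enough to conclude $\gamma = \gamma'$: since both satisfy (\ref{eq:accept1}) and the normalization (i)--(ii), and the inductive argument in part (a) shows these determine the function uniquely, the identification is automatic. No deep input beyond \cite[Section 6]{Sta92} and the $P$-kernel identity $\overline{\kappa} = \kappa^{-1}$ is needed.
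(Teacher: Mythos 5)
Your proposal is correct and follows essentially the same route as the paper: part (a) by observing that Stanley's proof of Corollary 6.7(a) (and the results it rests on) never uses the existence of a minimum element, part (b) by citation, and part (c) by verifying that $t \mapsto \sum_{a \in \min(P_{\le t})} \xi_{at}(x)$ is $\kappa$-acceptable via the same interchange of summation and then invoking the uniqueness in (a). Your extra remark that gradedness of $P_{\le t}$ forces $\rho(a,t) = \rho(t)$ for every minimal element $a$ is a detail the paper leaves implicit but which is indeed needed to identify the result of the inner sum with $\overline{\delta}_t(x)$.
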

   \begin{proof}
     The proof of part (a) of \cite[Corollary 6.7]{Sta92} (as well as those of Lemma
     6.4 and Proposition 6.6 in \cite{Sta92}, on which this proof is based) does not
     use the assumption that $P$ has a minimum element. Thus part (a) of the proposition
     holds as well. Part (b) is identical to part (b) of \cite[Corollary 6.7]{Sta92}.
     For part (c), let $\delta_t (x)$ denote the right-hand side of (\ref{eq:propuaccept}).
     Clearly, we have $\delta_a (x) = 1$ for every minimal element $a \in P$ and $\deg
     \delta_t (x) \le \lfloor (\rho(t) - 1) / 2 \rfloor$ for every $t \in P$ with $\rho
     (t) \ge 1$. Moreover, for $t \in P$ we have
       \begin{eqnarray*}
       \sum_{s \le_P \, t} \ \delta_s (x) \kappa_{st} (x) &=&
       \sum_{s \le_P \, t} \, \sum_{a \in \min (P_{\le s})} \ \xi_{as} (x) \kappa_{st}
       (x) \ = \ \sum_{a \in \min (P_{\le t})} \, \sum_{a \le_P \, s \le_P \, t} \
       \xi_{as} (x) \kappa_{st} (x) \\
       & & \\
       &=& \sum_{a \in \min (P_{\le t})} \ \overline{\xi}_{at} (x) \ = \
       \overline{\delta}_t (x).
       \end{eqnarray*}
     Thus $\delta: P \to \KK[x]$ is $\kappa$-acceptable and the result follows from
     the uniqueness statement of part (a).
   \end{proof}

  For the remainder of this section we consider a lower graded, locally Eulerian poset
  $P$ (so that the M\"obius function of $P$ satisfies $\mu_P (s, t) = (-1)^{\rho(s,
  t)}$ for $s \le_P t$) and fix the $P$-kernel $\lambda \in I(P)$ with $\lambda (s, t)
  = (x-1)^{\rho(s, t)}$ for $s \le_P t$ (see \cite[Proposition 7.1]{Sta92}).

  \begin{example} \label{ex:gammacube}
  Suppose that every closed interval in $P$ is isomoprhic to a Boolean lattice. Then
  we have $\xi_{st} (x) = 1$ for all $s \le_P t$ \cite[Proposition 2.1]{Sta87}
  \cite[Example 3.14.8]{StaEC1} and hence $\gamma_t (x)$ is equal to the number of
  minimal elements of $P_{\le t}$ by (\ref{eq:propuaccept}). Thus, if $P_{\le u}$ is
  isomorphic to the poset of nonempty faces of a simplex for some $u \in P$, then
  $\gamma_t (x) = \rho(t) + 1$ for $t \le_P u$. Similarly, if $P_{\le u}$ is
  isomorphic to the poset of nonempty faces of a cube, then $\gamma_t (x) =
  2^{\rho(t)}$ for $t \le_P u$. \qed
  \end{example}

  We will refer to the maximum length of a chain in a finite poset $P$ as the
  \emph{length} of $P$. The following definition generalizes that of the $h$-vector of
  a finite lower Eulerian poset in \cite[Example 7.2]{Sta92}.

  \begin{definition} \label{def:hpoly}
    Let $P$ be a finite, lower graded and locally Eulerian poset of length $d$ and
    let $\gamma = \gamma(P, \lambda): P \to \KK[x]$. The polynomial $h (P, x)$
    defined by
      \begin{equation} \label{eq:def-hgen}
      \overline{h} (P, x) \ = \ x^d \, h (P, 1/x) \ = \ \sum_{t \in P} \ \gamma_t
      (x) \, (x-1)^{d - \rho(t)}
      \end{equation}
    is called the \emph{$h$-polynomial} of $P$. The \emph{$h$-vector} of $P$ is the
    sequence $(h_0, h_1,\dots,h_d)$, where $h (P, x) = h_0 + h_1 x + \cdots + h_d x^d$.
  \end{definition}

  Since $\gamma$ is $\lambda$-acceptable, we have $h (P, x) = \gamma_{\hat{1}} (x)$
  when $P$ has a maximum element $\hat{1}$. The following example lists several known
  notions of $h$-vectors of complexes and posets which are captured by Definition
  \ref{def:hpoly}.

  \begin{example} \label{ex:cchgen}
  (a) Suppose that $P$ is finite of length $d$ and has a minimum element. Then $\gamma$
  is as in \cite[Section 7]{Sta92} and $h (P, x)$ coincides with the generalized
  $h$-polynomial of $P$, introduced in \cite[Section 4]{Sta87} (see also \cite[Example
  7.2]{Sta92}). If, in addition, every principal order ideal of $P$ is isomorphic to a
  Boolean lattice (so that $P$ is a simplicial poset, in the sense of 
  \cite[p.~135]{StaEC1} \cite[p.~112]{StaCCA}), then $\gamma_t (x) = 1$ for every $t 
  \in P$ and hence (see \cite[Corollary 2.2]{Sta87})
    \begin{equation} \label{eq:hsp}
      h (P, x) \ = \ x^d \ \sum_{t \in P} \ \gamma_t (1/x) \left( \frac{1}{x} - 1
       \right)^{d - \rho(t)} = \ \sum_{t \in P} \ x^{\rho(t)} \,
       (1-x)^{d - \rho(t)}
    \end{equation}
  is the $h$-polynomial of the simplicial poset $P$. We refer the reader to \cite[Section 
  III.6]{StaCCA} for further information on $h$-polynomials (equivalently, on $h$-vectors) 
  of simplicial posets. 

  (b) Suppose that $P$ is finite of length $d-1$ and that every principal order ideal
  of $P$ is isomorphic to the poset of nonempty faces of a simplex. In view of Example
  \ref{ex:gammacube}, we have
    $$ h (P, x) \ = \ x^{d-1} \ \sum_{t \in P} \ \gamma_t (1/x) \left( \frac{1}{x} - 1
       \right)^{d - \rho(t) - 1} = \ \sum_{t \in P} \ (\rho(t) + 1) \, x^{\rho(t)} \,
       (1-x)^{d - \rho(t) - 1}. $$
  Thus, if $P$ is the poset of nonempty faces of a pure simplicial complex $\Delta$ of
  dimension $d-1$, then
     \begin{eqnarray*}
      h (P, x) &=& \sum_{F \in \Delta \sm \{\varnothing\}} \ |F| \cdot x^{|F|-1} \,
      (1-x)^{d - |F|} \ = \ \sum_{v \in \vert (\Delta)} \ \sum_{E \in \link_\Delta (v)}
      \ x^{|E|} \cdot (1-x)^{d - |E| - 1} \\
      & & \\
      &=& \sum_{v \in \vert (\Delta)} \ h(\link_\Delta (v), x)
    \end{eqnarray*}
  is the short simplicial $h$-polynomial of $\Delta$, introduced by Hersh and Novik
  \cite{HN02}. More generally, if $m \in \NN$ is fixed and $P$ is the poset of faces of
  $\Delta$ of dimension $m$ or higher, then $h (P, x)$ is equal to the sum of the
  $h$-polynomials of the links of all $m$-dimensional faces of $\Delta$, introduced in
  \cite[Definition 4.9]{Swa06} as a generalization of the short simplicial $h$-polynomial
  of $\Delta$.

  (c) Suppose that $P$ is finite of length $d$ and that every principal order ideal of
  $P$ is isomorphic to the poset of nonempty faces of a cube. Using Example
  \ref{ex:gammacube}, we find similarly that
    $$ h (P, x) \ = \ \sum_{t \in P} \ (2x)^{\rho(t)} \,
       (1-x)^{d - \rho(t)}. $$
  Thus we have $h (P, x) = h^{(sc)} (\KR, x)$ when $P = \fF(\KR)$ for some cubical
  complex $\KR$. More generally, if $m \in \NN$ is fixed and $P$ is the poset of faces
  of a pure cubical complex $\KR$ of dimension $m$ or higher, then $h (P, x)$ is equal to
  the sum of the $h$-polynomials of the links of all $m$-dimensional faces of $\KR$. \qed
  \end{example}

  We now recall the following key definition from \cite{Sta92}, extended to lower
  graded, locally Eulerian posets.

  \begin{definition} \label{def:formal} (\cite[Definition 7.4]{Sta92})
    Let $P$ be a lower graded, locally Eulerian poset. A \emph{formal subdivision} of $P$
    consists of a lower graded, locally Eulerian poset $Q$ and a surjective map
    $\sigma: Q \to P$ which have the following properties:
      \begin{itemize}
        \item[(i)] $\rho(t) \le \rho (\sigma(t))$ for every $t \in Q$.
        \item[(ii)] For every $u \in P$, the inverse image $Q_u := \sigma^{-1}
        (P_{\le u})$ is an order ideal of $Q$ of length $\rho(u)$, called the restriction
        of $Q$ to $u$.
        \item[(iii)] For every $u \in P$ we have
          \begin{equation} \label{eq:formal}
            x^{\rho(u)} \, h (Q_u, 1/x) \ = \ h (\inte(Q_u), x),
          \end{equation}
        where $\inte(Q_u) := \sigma^{-1} (u)$ is the interior of $Q_u$, the polynomial
        $h (Q_u, x)$ is defined by (\ref{eq:def-hgen}); i.e.,
          $$ x^{\rho(u)} \, h (Q_u, 1/x) \ = \ \sum_{t \in Q_u} \ \gamma_t (x) \,
            (x-1)^{\rho(u) - \rho(t)} $$
        and $h (\inte(Q_u), x)$ is defined by
          $$ x^{\rho(u)} \, h (\inte(Q_u), 1/x) \ = \
             \sum_{t \in \inte(Q_u)} \ \gamma_t (x) \, (x-1)^{\rho(u) - \rho(t)}, $$
        where $\gamma = \gamma(Q, \lambda): Q \to \KK[x]$.
      \end{itemize}
  \end{definition}

  We now summarize the main properties of $h$-vectors and local $h$-vectors of formal
  subdivisions. The proofs are straightforward generalizations of the proofs of Theorem
  7.5, Corollary 7.7 and Theorem 7.8 in \cite{Sta92} and will thus be omitted.

  \begin{theorem} \label{thm:gen}
    Let $\sigma: Q \to P$ be a formal subdivision of a lower graded, locally Eulerian
    poset $P$.
      \begin{itemize}
        \item[(a)] The function $f: P \to \KK[x]$ defined by $f_u (x) = h (Q_u, x)$ is
        $\lambda$-acceptable.
        \item[(b)] Assume that $P$ has a maximum element $\hat{1}$ and define
          \begin{equation} \label{eq:deflocal}
          \ell_P (Q, x) \ = \sum_{u \in P} \ h(Q_u, x) \, \xi^{-1}_{u \hat{1}} (x),
          \end{equation}
        where $\xi = \xi(P, \lambda) \in I(P)$. Then
          \begin{equation} \label{eq:genthmsymm}
            x^d \, \ell_P (Q, 1/x) \ = \ \ell_P (Q, x),
          \end{equation}
        where $d$ is the rank of $P$.
        \item[(c)] Assume that $P$ is graded of rank $d$. Then
          \begin{equation} \label{eq:genthmlocal}
          h (Q, x) \ = \sum_{u \in P} \ \ell_{P_{\le u}} (Q_u, x) \, h (P_{\ge u}, x),
          \end{equation}
        where $h$ is defined by (\ref{eq:def-hgen}).
      \end{itemize}
  \end{theorem}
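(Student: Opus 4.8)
The plan is to obtain the three parts by carrying over, respectively, the proofs of Theorems 7.5 and 7.8 and of Corollary 7.7 in \cite{Sta92}, the adaptations being the systematic use of the fiber decomposition of $Q$ and of Proposition \ref{prop:uaccept}(c) to cope with the absence of a minimum element. Throughout I would set $f_u(x) = h(Q_u, x)$ for $u \in P$ and use the bar operation $\overline{g}_t(x) = x^{\rho(t)} g_t(1/x)$ (and $\overline{g}_{st}(x) = x^{\rho(s,t)} g_{st}(1/x)$), which is multiplicative for the convolution product in $I(P)$ and between $I(P)$ and $\KK[x]^P$, fixes the identity of $I(P)$, and is an involution on polynomials of the degrees occurring here; the needed degree bounds ($\deg f_u \le \rho(u)$, and so on) follow from condition (ii) of Proposition \ref{prop:uaccept}(a) exactly as in \cite[Section 6]{Sta92}. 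The basic combinatorial input is that, by surjectivity of $\sigma$ and condition (ii) of Definition \ref{def:formal}, one has the disjoint decomposition $Q_u = \bigsqcup_{s \le_P u} \sigma^{-1}(s) = \bigsqcup_{s \le_P u} \inte(Q_s)$ for each $u$ (hence $Q = \bigsqcup_{u \in P} \inte(Q_u)$), and that $\gamma(Q, \lambda)_t = \gamma(Q_s, \lambda)_t$ for $t \in Q_s$, since $\gamma_t$ depends only on the principal order ideal of $Q$ below $t$. For part (a), substituting this decomposition into the defining formula (\ref{eq:def-hgen}) for $\overline{h}(Q_u, x)$ and using that $P_{\le u}$ is graded, so $\rho(u) = \rho(s) + \rho(s, u)$, I would get
\[
  \overline{h}(Q_u, x) \ = \ \sum_{s \le_P u} (x-1)^{\rho(s,u)} \sum_{t \in \inte(Q_s)} \gamma_t(x)(x-1)^{\rho(s)-\rho(t)} \ = \ \sum_{s \le_P u} (x-1)^{\rho(s,u)}\, \overline{h}(\inte(Q_s), x).
\]
Condition (iii) of Definition \ref{def:formal} states that $\overline{h}(Q_s, x) = h(\inte(Q_s), x)$; applying the bar and using its involutivity gives $\overline{h}(\inte(Q_s), x) = h(Q_s, x) = f_s(x)$, so the last display reads $\overline{f}_u = \sum_{s \le_P u} f_s\, \lambda_{su}$, that is, $f\lambda = \overline{f}$, so $f$ is $\lambda$-acceptable.

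For part (b) I would argue inside $I(P)$. Since $\xi = \xi(P, \lambda)$ is $\lambda$-totally acceptable, $\overline{\xi} = \xi\lambda$, and therefore, by multiplicativity of the bar and the fact that it fixes the identity, $\overline{\xi^{-1}} = (\overline{\xi})^{-1} = (\xi\lambda)^{-1} = \lambda^{-1}\xi^{-1}$. Combining this with part (a) yields
\[
  \overline{f * \xi^{-1}} \ = \ \overline{f} \cdot \overline{\xi^{-1}} \ = \ (f\lambda)(\lambda^{-1}\xi^{-1}) \ = \ f * \xi^{-1}
\]
as elements of $\KK[x]^P$; evaluating both sides at $\hat{1}$ gives $x^d\, \ell_P(Q, 1/x) = \ell_P(Q, x)$, which is (\ref{eq:genthmsymm}). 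I note for later use that this computation also shows $\overline{g}_u = g_u$ for every $u \in P$, where $g := f * \xi^{-1}$.

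For part (c) I would assemble three ingredients. First, a restriction remark: for $v \le_P u$ the interval $[v, u]$ and the order ideal $Q_v$ are the same whether formed in $P_{\le u}$ or in $P$, and $\xi(P_{\le u}, \lambda)$ agrees with $\xi$ on intervals inside $P_{\le u}$ by the uniqueness in Proposition \ref{prop:uaccept}(b); hence $\ell_{P_{\le u}}(Q_u, x) = \sum_{v \le_P u} h(Q_v, x)\, \xi^{-1}_{vu}(x) = g_u(x)$. Second, since $(P_{\ge u})_{\le w} = [u, w]$ has $u$ as its unique minimal element, Proposition \ref{prop:uaccept}(c) applied to the lower graded, locally Eulerian poset $P_{\ge u}$ gives $\gamma(P_{\ge u}, \lambda)_w = \xi(P_{\ge u}, \lambda)_{uw} = \xi_{uw}(x)$, and then the formula (\ref{eq:def-hgen}) for $P_{\ge u}$ (which has length $d - \rho(u)$) gives $\overline{h}(P_{\ge u}, x) = \sum_{w \ge_P u} \xi_{uw}(x)\,(x-1)^{d-\rho(w)}$. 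Third, exactly as in part (a), the decomposition $Q = \bigsqcup_{u}\inte(Q_u)$ and condition (iii) give $\overline{h}(Q, x) = \sum_{u \in P} (x-1)^{d-\rho(u)}\, f_u(x)$. Now I would apply the bar to the right-hand side of (\ref{eq:genthmlocal}), namely $\sum_u g_u(x)\, h(P_{\ge u}, x)$: using $\overline{g}_u = g_u$, substituting the formula for $\overline{h}(P_{\ge u}, x)$, interchanging the order of summation, and using $f = g * \xi$, I would obtain $\sum_{w}(x-1)^{d-\rho(w)} (g*\xi)_w(x) = \sum_w (x-1)^{d-\rho(w)} f_w(x) = \overline{h}(Q, x)$; removing the bar proves (\ref{eq:genthmlocal}).

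The hard part is not any of these identities but the verification that the hypotheses of \cite{Sta92} can be weakened as claimed: one must check that every degree bound used for the bar manipulations, and every instance of compatibility of $\gamma$, of $\xi$ and of the $\lambda$-kernel under passage to order ideals $P_{\le u}$, upper sets $P_{\ge u}$ and closed intervals, remains valid when $P$ has no minimum — and it does, since each such statement is ultimately a statement about a principal order ideal (which has a maximum) or a closed interval, to which Sections 6 and 7 of \cite{Sta92} and Proposition \ref{prop:uaccept} apply directly. One must also note that $h(Q, x)$ is well defined: $Q = \bigcup_u Q_u$ is finite ($P$ being finite and each $Q_u$ finite) and of length $d$, since $P$ is graded of rank $d$ and each $Q_u$ has length $\rho(u)$. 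With these points in hand, the three computations above are the term-by-term analogues of those in \cite[Section 7]{Sta92}.
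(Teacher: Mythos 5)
Your proposal is correct and follows exactly the route the paper intends: the paper omits the proof, stating that it is a straightforward generalization of the proofs of Theorem 7.5, Corollary 7.7 and Theorem 7.8 of Stanley's paper, and your argument carries out precisely that generalization (fiber decomposition $Q_u = \bigsqcup_{s \le_P u} \inte(Q_s)$ plus condition (iii) for part (a), the bar-homomorphism calculus in $I(P)$ for part (b), and Proposition \ref{prop:uaccept}(c) applied to $P_{\ge u}$ together with $f = g\,\xi$ for part (c)). The points you flag as needing care in the absence of a minimum element are exactly the right ones, and your resolutions are sound.
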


  Parts (b) and (c) of Theorem \ref{thm:gen} generalize Theorems \ref{thm:symm} and
  \ref{thm:local}, respectively. Indeed, let $C$ be a $d$-dimensional cube and let $P
  = \fF(C)$. For each $u \in P$, the interval $[u, C]$ in $P$ is a Boolean lattice
  and hence $\xi_{st} (x) = 1$ for all $s \le_P t$. Thus, in view of Example
  \ref{ex:cchgen} (c), if $Q$ is a cubical subdivision $\Gamma$ of $C$, then the
  right-hand side of (\ref{eq:deflocal}) is equal to $\ell_C (\Gamma, x)$ and
  (\ref{eq:genthmsymm}) reduces to (\ref{eq:thmsymm}). For similar reasons,
  (\ref{eq:genthmlocal}) reduces to (\ref{eq:thmlocal}) in the case of cubical
  subdivisions of cubical complexes.

  \begin{remark} \label{rem:notaccept}
    For a cubical subdivision $\Gamma$ of a cube $C$, part (a) of Theorem \ref{thm:gen}
    asserts that
      $$ \sum_{F \, \le_P \, G} \ h^{(sc)} (\Gamma_F, x) \,(x-1)^{\dim(G) - \dim(F)}
      \ = \ x^{\dim(G)} \, h^{(sc)} (\Gamma_G, 1/x) $$
    holds for every $G \in P = \fF(C)$ (this indeed follows from (\ref{eq:mobinvG}) and
    Corollary \ref{cor:screc}). As the subdivision of the line segment with one interior
    vertex shows already, the corresponding property fails for the long cubical
    $h$-vector. \qed
  \end{remark}

  \begin{example} \label{ex:newsub}
  (a) Let $\Delta'$ be a simplicial subdivision of a simplicial complex $\Delta$, in
  the sense of \cite[Part I]{Sta92}, with associated subdivision map $\sigma: \Delta'
  \to \Delta$. The map $\sigma$ induces a surjective map $\sigma: Q \to P$, where $P$
  and $Q$ are the posets of nonempty faces of $\Delta$ and $\Delta'$, respectively.
  Endowed
  with this map, the poset $Q$ is a formal subdivision of $P$. Indeed, conditions (i)
  and (ii) of Definition \ref{def:formal} are clear. By Example \ref{ex:cchgen} (b),
  the polynomial $h(Q_F, x)$ is equal to the short simplicial $h$-polynomial of the
  restriction $\Delta'_F$ of $\Delta'$ to $F \in \Delta \sm \{\varnothing\}$. Thus,
  condition (iii) can be verified by an argument analogous to the one in the proof of
  Corollary \ref{cor:screc}. We note that if $\Delta$ (and hence $\Delta'$) is pure,
  then $h(Q, x)$ is equal to the short simplicial $h$-polynomial of $\Delta'$,
  discussed in Example \ref{ex:cchgen} (b).

  More generally, if $m \in \NN$ is fixed and $P$ and $Q$ are the posets of faces of
  $\Delta$ and $\Delta'$, respectively, of dimension $m$ or higher, then there is a
  formal subdivision $\sigma: Q \to P$ of $P$ in which $h(Q_F, x)$ is equal to the
  sum of the $h$-polynomials of the links of all $m$-dimensional faces of $\Delta'_F$,
  for every $F \in P$ (condition (iii) of Definition \ref{def:formal} can be verified
  again as in the proof of Corollary \ref{cor:screc}). Moreover, if $\Delta$ is pure,
  then $h (Q, x)$ is equal to the sum of the $h$-polynomials of the links of all
  $m$-dimensional faces of $\Delta'$.

  (b) Similar remarks hold for the induced map $\sigma: Q \to P$, when $P$ and $Q$ are
  the posets of faces of dimension $m$ or higher of two cubical complexes $\KR$ and
  $\KR'$, respectively, such that $\KR'$ is a cubical subdivision of $\KR$. \qed
  \end{example}

  \subsection{CW-regular subdivisions} \label{subsec:CW}
  We now consider a class of complexes which generalize regular CW-complexes. A
  triangulable space for us will be any topological space which is homeomorphic to the
  geometric realization of a finite simplicial complex. Let ${\rm C}$ be a finite
  collection of subspaces of a Hausdorff space $X$, called \emph{faces}, such that each
  element of ${\rm C}$ is homeomorphic either to the zero-dimensional ball or to a
  triangulable, connected manifold with nonempty boundary. Assume that the following
  conditions hold:
  \begin{itemize}
  \itemsep=0pt
  \item[$\bullet$] $\varnothing \in {\rm C}$,

  \item[$\bullet$] the interiors of the nonempty faces of ${\rm C}$ partition $X$ and

  \item[$\bullet$] the boundary of any face of ${\rm C}$ is a union of faces of ${\rm
  C}$.
  \end{itemize}
  For lack of better terminology, we will refer to such a complex ${\rm C}$ as a
  \emph{regular M-complex}. Thus, ${\rm C}$ will be a regular CW-complex if every face
  is homeomorphic to a ball. The intersection of any two faces of ${\rm C}$ is a union
  (possibly empty) of faces of ${\rm C}$. We say that ${\rm C}$ has the
  \emph{intersection property} if the intersection of any two faces of ${\rm C}$ is
  also a face of ${\rm C}$. We denote by $\fF({\rm C})$ the set of nonempty faces of
  ${\rm C}$, partially ordered by inclusion. This poset is lower graded but not
  necessarily locally Eulerian. Moreover, the homeomorphism type of $X$ is not
  necessarily determined by $\fF({\rm C})$. As was the case with cubical complexes,
  any topological properties of a regular CW-complex ${\rm C}$ we consider will refer
  to those of the order complex of $\fF({\rm C})$.

  Let ${\rm C}$ be a regular M-complex and set $P = \fF({\rm C})$. A (topological)
  \emph{CW-regular subdivision} of ${\rm C}$ is a regular CW-complex $\Gamma$,
  together with a (necessarily surjective) map $\sigma: \Gamma \sm \{\varnothing\}
  \to P$, satisfying conditions (i) and (ii) of Definition \ref{def:formal}, as well
  as the following, instead of condition (iii):
    \begin{itemize}
      \item[(iii$'$)] For every $u \in P$, the subcomplex $\Gamma_u := \sigma^{-1}
      (P_{\le u}) \cup \{\varnothing\}$ of $\Gamma$ is homeomorphic to the manifold $u$
      and $\inte(\Gamma_u) := \sigma^{-1} (u)$ is equal to the set of interior faces of
      this manifold.
    \end{itemize}
  This specializes to the notion of CW-regular subdivision of \cite[p. 839]{Sta92}
  when ${\rm C}$ is a regular CW-complex. The poset $Q = \fF(\Gamma)$ of nonempty
  faces of $\Gamma$ is lower graded and locally Eulerian and we have a surjective
  map $\sigma: Q \to P$ which satisfies conditions (i) and (ii) of Definition
  \ref{def:formal}. The following statement generalizes \cite[Proposition 7.6]{Sta92}.
  \begin{proposition} \label{prop:CWformal}
    Let ${\rm C}$ be a regular M-complex and assume that $P = \fF({\rm C})$ is locally
    Eulerian. Then every CW-regular subdivision of ${\rm C}$ is a formal subdivision.
  \end{proposition}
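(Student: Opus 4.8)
The plan is to mimic the proof of \cite[Proposition 7.6]{Sta92}, which shows that CW-regular subdivisions of regular CW-complexes are formal subdivisions, checking that the only substantive hypothesis used is the validity of the recursion \eqref{eq:formal} for each restriction $\Gamma_u$, and that this recursion holds for manifolds-with-boundary (not just balls) by exactly the mechanism already exhibited in the proof of Corollary \ref{cor:screc}. Concretely, given a CW-regular subdivision $\sigma: \Gamma \sm \{\varnothing\} \to P$ of the regular M-complex ${\rm C}$, the induced map $\sigma: Q \to P$ on $Q = \fF(\Gamma)$ satisfies conditions (i) and (ii) of Definition \ref{def:formal} by construction (these are literally conditions (i) and (ii) of the definition of CW-regular subdivision), so the entire content lies in verifying condition (iii), equation \eqref{eq:formal}, for each $u \in P$.

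First I would fix $u \in P$ and observe that, by condition (iii$'$), the subcomplex $\Gamma_u$ is a regular CW-complex homeomorphic to the triangulable manifold $u$, which has nonempty boundary (being homeomorphic to one of the allowed face types of a regular M-complex), and $\inte(\Gamma_u) = \sigma^{-1}(u)$ is exactly the set of interior faces of this manifold. The polynomial $h(\Gamma_u, x)$ of Definition \ref{def:hpoly} — applied to the poset $\fF(\Gamma_u)$, which is lower graded and locally Eulerian since $\Gamma$ is a regular CW-complex — is, by the formula \eqref{eq:def-hgen} together with the computation of $\gamma_t(x)$ in Example \ref{ex:gammacube} for Boolean intervals, an ``interior-weighted'' $h$-polynomial governed by the usual face recursion. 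The key step is then to recognize that \eqref{eq:formal}, namely $x^{\rho(u)} h(\Gamma_u, 1/x) = h(\inte(\Gamma_u), x)$, is the precise analogue of Proposition \ref{prop:sirec} (and of \eqref{eq:propscrec} in Corollary \ref{cor:screc}) for the order complex of $\fF(\Gamma_u)$: since $\Gamma_u$ is a regular CW-complex homeomorphic to a manifold with nonempty boundary, its order complex is a homology manifold with boundary, so Proposition \ref{prop:sirec} applies to each vertex link and the reciprocity follows by the vertex-decomposition bookkeeping of the proof of Corollary \ref{cor:screc} — counting, for each face $F$, the vertices $v$ of $F$ for which $F$ corresponds to an interior face of $\link(v)$ — except now carried out in the generalized setting of Definition \ref{def:hpoly} rather than specifically for cubes.

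The main obstacle I anticipate is the passage from the classical setting of \cite[Proposition 7.6]{Sta92}, where all faces are balls, to the M-complex setting, where faces are general triangulable connected manifolds with nonempty boundary: one must be sure that the reciprocity \eqref{eq:propsirec} of Proposition \ref{prop:sirec} — stated there for homology balls and spheres — extends to homology manifolds with boundary, and that the notion of $\inte$ used in Definition \ref{def:formal}(iii) is compatible with the manifold interior. Since Proposition \ref{prop:sirec} is invoked in the excerpt precisely via the remark following Corollary \ref{cor:screc} that its statement and proof are valid for manifolds with and without boundary, this extension is available, and the verification reduces to a routine Möbius-inversion and Euler-characteristic count identical in form to the one displayed in the proof of Corollary \ref{cor:screc}, now phrased in terms of the functions $\gamma = \gamma(Q, \lambda)$ rather than the explicit monomials $(2x)^{\dim F}(1-x)^{d-\dim F}$. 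Once \eqref{eq:formal} is checked for every $u$, all three conditions of Definition \ref{def:formal} hold and $\sigma: Q \to P$ is a formal subdivision of $P$, as claimed.
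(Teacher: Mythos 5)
Your reduction of the problem is right: conditions (i) and (ii) of Definition \ref{def:formal} are immediate from the definition of a CW-regular subdivision, and the whole content is the reciprocity \eqref{eq:formal} for each restriction $\Gamma_u$, which is a manifold with nonempty boundary rather than a ball. But the mechanism you propose for proving that reciprocity does not work at the required level of generality. The proof of Corollary \ref{cor:screc} rests entirely on the vertex-link decomposition \eqref{eq:lemsc}, i.e.\ on the identity $h^{(sc)}(\KR,x)=\sum_{v}h(\link_\KR(v),x)$, and that identity is available only because for a cube $F$ the weight $\gamma_F(x)=2^{\dim F}$ equals the number of vertices of $F$, so the contribution of each face can be distributed among its vertices (Example \ref{ex:gammacube}). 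For a general CW-regular subdivision, $\Gamma_u$ is an arbitrary regular CW-complex: the intervals $[\varnothing,t]$ need not be Boolean, simplex, or cube posets, $\gamma_t(x)=\sum_a \xi_{at}(x)$ is a genuine polynomial built from toric-type $g$-polynomials rather than a vertex count, and there is no decomposition of $h(Q_u,x)$ as a sum of $h$-polynomials of vertex links. Relatedly, passing to the order complex of $\fF(\Gamma_u)$ and invoking Proposition \ref{prop:sirec} there would give reciprocity for the \emph{simplicial} $h$-polynomial of the barycentric subdivision, which is not the generalized $h$-polynomial $h(Q_u,x)$ of Definition \ref{def:hpoly}; the two do not coincide for non-Boolean posets.

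The paper's actual argument bypasses links altogether. One writes out $h(\inte(Q_u),x)$ from its definition, substitutes the $\lambda$-acceptability relation $x^{\rho(t)}\gamma_t(1/x)=\sum_{s\le t}\gamma_s(x)(x-1)^{\rho(t)-\rho(s)}$, interchanges the order of summation, and is left with evaluating, for each face $s$ of $\Gamma_u$, the alternating sum $\nu(s)=\sum_{t\in\inte(\Gamma_u),\,t\ge s}(-1)^{\rho(u)-\rho(t)}$. This is computed as a difference of M\"obius function values of $\hat{Q}_u$ and $\widehat{\partial Q}_u$, which are known for face posets of regular CW-decompositions of manifolds with boundary (\cite[Proposition 3.8.9]{StaEC1}); one gets $\nu(s)=1$ for every $s$, whether interior or boundary, and \eqref{eq:formal} follows. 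If you want to rescue your outline, you would need to replace the appeal to Corollary \ref{cor:screc} by this M\"obius-theoretic computation (it is the computation of \cite[Lemma 6.2]{Sta87}); as written, your argument establishes \eqref{eq:formal} only in the special case where every cell of $\Gamma$ is a cube.
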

  \begin{proof}
    As in the special case of \cite[Proposition 7.6]{Sta92}, equation (\ref{eq:formal})
    can be verified by the computation in the proof of \cite[Lemma 6.2]{Sta87}. To
    facilitate the reader, we give the details as follows. We recall that $Q_u$ is the
    poset of nonempty faces of $\Gamma_u$ and that $\inte(Q_u) = \inte(\Gamma_u) =
    \sigma^{-1} (u)$ for $u \in P$ and set $\gamma = \gamma(Q, \lambda): Q \to \KK[x]$.
    For $u \in P$, we have
      \begin{eqnarray*}
      h (\inte(Q_u), x) &=& x^{\rho(u)} \sum_{t \in \inte(\Gamma_u)}
      \ \gamma_t (1/x) \, \left( \frac{1}{x} - 1 \right)^{\rho(u) - \rho(t)} \\
      & & \\
      &=& \sum_{t \in \inte(\Gamma_u)} \ x^{\rho(t)} \, \gamma_t (1/x) \,
      (1-x)^{\rho(u) - \rho(t)} \\
      & & \\
      &=& \sum_{t \in \inte(\Gamma_u)} \ (1-x)^{\rho(u) - \rho(t)} \
      \sum_{s \le_Q t} \ \gamma_s (x) \, (x-1)^{\rho(t) - \rho(s)} \\
      & & \\
      &=& \sum_{s \in \Gamma_u \sm \{\varnothing\}} \gamma_s (x) \,
      (x-1)^{\rho(u) - \rho(s)} \sum_{t \in \inte(\Gamma_u): \, t \ge_Q s}
      (-1)^{\rho(u) - \rho(t)}.
      \end{eqnarray*}
    Denoting the inner sum by $\nu(s)$ and noting that $Q$ is locally Eulerian,
    we find that
      \begin{eqnarray*}
      \nu(s) &=& \sum_{t \in \Gamma_u: \, t \ge_Q s} (-1)^{\rho(u) - \rho(t)}
      \ \, - \sum_{t \in \partial \Gamma_u: \, t \ge_Q s} (-1)^{\rho(u) - \rho(t)} \\
      & & \\
      &=& (-1)^{\rho(u) - \rho(s)} \, \left( - \mu_{\hat{Q}_u} (s, \hat{1}) +
      \mu_{\widehat{\partial Q}_u} (s, \hat{1}) \right),
      \end{eqnarray*}
    where $\partial Q_u$ is the poset of nonempty faces of $\partial \Gamma_u$ and
    $\hat{\Omega}$ denotes a poset $\Omega$ with a maximum element $\hat{1}$ adjoined.
    Since $\Gamma_u$ is a regular CW-decomposition of a manifold with nonempty boundary
    and $s$ is a nonempty face of $\Gamma_u$, the various M\"obius functions can be
    computed from \cite[Proposition 3.8.9]{StaEC1} and we find that $\nu(s) = 1$ in
    both cases $s \in \inte(\Gamma_u)$ and $s \in \partial \Gamma_u$. Hence
      $$ h (\inte(Q_u), x) \ = \sum_{s \in Q_u} \gamma_s (x) \,
      (x-1)^{\rho(u) - \rho(s)} \ = \ x^{\rho(u)} \, h (Q_u, 1/x) $$
    and the proof follows.
  \end{proof}

  \begin{remark} \label{rem:notintersect}
    A regular CW-complex $\Gamma$ will be called \emph{cubical} if each face of
    $\Gamma$ is combinatorially equivalent to a cube (such a complex may not have
    the intersection property). Example \ref{ex:cchgen} implies that the
    $h$-polynomial of such a
    complex is equal to its short cubical $h$-polynomial, as defined by the
    right-hand side of (\ref{eq:defsc}). Thus, we may deduce from Proposition
    \ref{prop:CWformal} and Theorem \ref{thm:gen} (c) that Theorem \ref{thm:local}
    continues to hold if $\KR$ is replaced by a pure cubical regular CW-complex
    ${\rm C}$ and $\KR'$ is replaced by a cubical CW-regular subdivision of ${\rm
    C}$. We should point out that the links of nonempty faces of ${\rm C}$ are no
    longer necessarily simplicial complexes. However, their face posets are
    simplicial, in the sense of \cite[p.~135]{StaEC1} \cite[p.~112]{StaCCA}. Thus 
    the $h$-vectors of these links, which appear in (\ref{thm:local}), should be 
    defined by (\ref{eq:hsp}). Similarly, Theorem \ref{thm:symm} continues to hold 
    if $\Gamma$ is replaced by a cubical CW-regular subdivision of a cube $C$. \qed
  \end{remark}

  \begin{example} \label{ex:cbs2}
    Let ${\rm C}$ be a regular CW-complex and set $P = \fF ({\rm C})$. Let $Q$ 
    denote the set of nonempty closed intervals in $P$, ordered by inclusion, and 
    consider the map $\sigma: Q \to P$ defined by $\sigma ([s, t]) = t$ for all $s 
    \le_P t$. One can easily check that the poset $Q$ is lower graded and locally 
    Eulerian (since so is $P$) and that $\sigma$ is surjective and satisfies 
    conditions (i) and (ii) of Definition \ref{def:formal}. 

    Under further assumptions (for instance, if the complex ${\rm C}$ is Boolean or 
    cubical), we have $Q = \fF(\Gamma)$ for some (cubical) regular CW-complex $\Gamma$ 
    and the resulting map $\sigma: \Gamma \sm \{\varnothing\} \to P$ is a CW-regular 
    subdivision of ${\rm C}$; see \cite[Section 2.3]{BBC97}. Let us compute $\ell_P 
    (Q, x)$ in one interesting special case, namely that in which ${\rm C}$ is the 
    simplex $2^V$ on a $d$-element vertex set $V$. Then $\Gamma$ is a cubical 
    complex with $d$ maximal faces, having a common vertex, and $\sigma$ is the 
    \emph{cubical barycentric subdivision}, studied in \cite{Het96}, of $2^V$. From 
    (\ref{eq:deflocal}) we have
      \begin{equation} \label{eq:localcbs2}
        \ell_P (Q, x) \ = \sum_{\varnothing \subset F \subseteq V} \ (-1)^{d - |F|}
        \, h^{(sc)} (\Gamma_F, x)
        \end{equation}
    where, setting $k = |F|$, 
      $$ h^{(sc)} (\Gamma_F, x) \ = \ \sum_{i=0}^{k-1} f_i (\Gamma_F) (2x)^i 
         (1-x)^{k-i-1}. $$
    Recall that $f_i (\Gamma_F)$ counts the number of pairs $s \subseteq t$ of nonempty 
    subsets of $F$ such that $|t \sm s| = i$. There are ${k \choose i}$ ways to choose 
    the elements of $t \sm s$ and, given any such choice, there are $2^{k-i} - 1$ ways 
    to choose those of $s$. We conclude that $f_i (\Gamma_F) = {k \choose i} (2^{k-i} - 
    1)$ and hence that 
      \begin{equation} \label{eq:shcbs2}
        h^{(sc)} (\Gamma_F, x) \ = \ \frac{2^k - (1+x)^k}{1-x}. 
      \end{equation}
    Using equations (\ref{eq:localcbs2}) and (\ref{eq:shcbs2}) and the binomial theorem, 
    we get 
      \begin{eqnarray*}
      \ell_P (Q, x) &=& \sum_{k=1}^d \ (-1)^{d - k} {d \choose k} \, \frac{2^k - (1+x)^k}
      {1-x} \\
      &=& 1 + x + x^2 + \cdots + x^{d-1}.
      \end{eqnarray*}
    Using the previous formula and (\ref{eq:genthmlocal}) and doing some more work, 
    one can express the (short or long) cubical $h$-vector of the cubical barycentric 
    subdivision of any simplicial (or Boolean) complex $\Delta$ in terms of the 
    simplicial $h$-vector of $\Delta$ and thus deduce the main results of \cite[Section 
    4]{Het96}. We leave the details to the interested reader.
  \qed
  \end{example}

  \begin{example} \label{ex:ann}
  Figure \ref{fig:ann} shows a graded, locally Eulerian poset of rank 2. This
  poset is isomorphic to the poset of nonempty faces of a two-dimensional regular
  M-complex ${\rm C}$ whose only two-dimensional face is homeomorphic to an annulus
  with boundary the union of two disjoint triangles. For the cubical CW-regular
  subdivision $\Gamma$ of ${\rm C}$, shown on the right of this figure, we compute
  that $h^{(sc)} (\Gamma, x) = 6x+6$ and $\ell_P (Q, x) = 6x$. \qed
  \end{example}

  \begin{figure}
  \epsfysize = 1.7 in \centerline{\epsffile{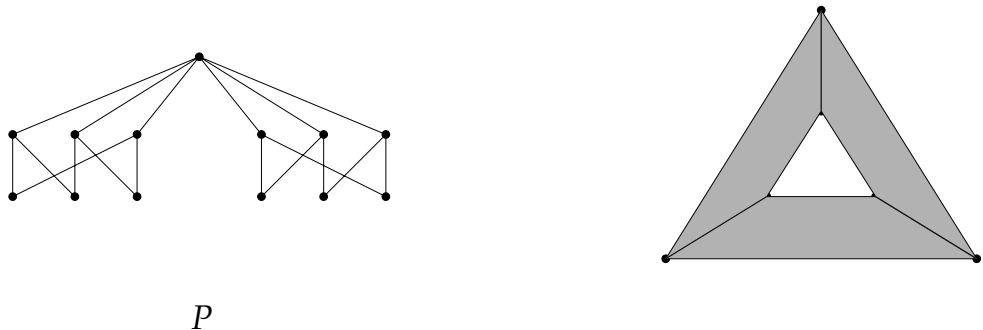}}
  \caption{Cubical subdivision of an annulus.}
  \label{fig:ann}
  \end{figure}

  It is an interesting problem to find conditions under which local $h$-vectors
  of CW-regular subdivisions are nonnegative. The proof of Theorem \ref{thm:nonn}
  works in the following situation. Let ${\rm C}$ be a regular M-complex with an
  inclusionwise maximum face, so that $P = \fF({\rm C})$ has a maximum element.
  The notion of a \emph{locally quasi-geometric} CW-regular subdivision of ${\rm C}$ 
  can be defined as for cubical subdivisions in Section \ref{sec:main}.

  \begin{theorem} \label{thm:nonnCW}
    Let $\sigma: \Gamma \sm \{\varnothing\} \to P$ be a locally quasi-geometric
    CW-regular subdivision of ${\rm C}$. If every closed interval in $Q =
    \fF(\Gamma)$ and $P = \fF({\rm C})$ is isomorphic to a Boolean lattice and
    $\Gamma$ has the intersection property, then $\ell_P (Q, x) \ge 0$.
  \end{theorem}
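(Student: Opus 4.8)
The plan is to mimic the proof of Theorem~\ref{thm:nonn} as closely as possible, reducing to a vertex-wise statement and then invoking a face-ring argument. First I would decompose $\ell_P(Q,x)$ over the vertices of $\Gamma$. Since every principal order ideal of $P$ is Boolean, the function $\xi(P,\lambda)$ is identically $1$, so $\xi^{-1}_{u\hat 1}(x) = \mu_P(u,\hat 1) = (-1)^{\rho(\hat 1) - \rho(u)}$; writing $d = \rho(\hat 1)$ this gives
  $$ \ell_P(Q,x) \ = \ \sum_{u \in P} \ (-1)^{d - \rho(u)} \, h(Q_u, x). $$
By Example~\ref{ex:cchgen}~(c), each $h(Q_u,x)$ equals the short cubical $h$-polynomial of the cubical CW-complex $\Gamma_u$, which (since $\Gamma$ has the intersection property and all intervals are Boolean) is an honest cubical complex. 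Applying the Hetyei identity (\ref{eq:lemsc}) to $\Gamma_u$ and interchanging the order of summation exactly as in the proof of Theorem~\ref{thm:nonn}, I obtain
  $$ \ell_P(Q,x) \ = \ \sum_{v \in \vert(\Gamma)} \ \sum_{u \in P: \, v \in u}
     (-1)^{d - \rho(u)} \, h(\link_{\Gamma_u}(v), x), $$
so it suffices to prove that each inner sum is coefficientwise nonnegative.

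Next I would set $\Delta = \link_\Gamma(v)$ and analyze the inner sum. The set of $u \in P$ containing $v$ is the interval $[\sigma(v), \hat 1]$, which is Boolean of some rank $e = d - \rho(\sigma(v))$; as in Section~\ref{sec:nonn}, the complexes $\link_{\Gamma_u}(v)$ are precisely the intersections $\Delta_S = \bigcap_{i \in S} \Delta_i$ for $S \subseteq [e]$, where $\Delta_i = \link_{\Gamma_{G_i}}(v)$ runs over the codimension-one faces $G_i$ of the top element in that interval. Since $\Gamma$ is a regular CW-complex with Boolean intervals and the intersection property, each $\Gamma_u$ is a cubical ball, hence Cohen-Macaulay over $\KK$, and therefore so are the links $\Delta_S$. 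The inner sum becomes $\sum_{S \subseteq [e]} (-1)^{|S|} h(\Delta_S, x)$, which is formally identical to (\ref{eq:nonnv}). Because $\Delta$ and the $\Delta_S$ are genuine simplicial complexes (the links of faces of a cubical complex being simplicial), the face-ring machinery of \cite[Section~4]{Sta92}, as recycled in the proof of Theorem~\ref{thm:nonnv}, applies verbatim: one shows that a special linear system of parameters exists, using local quasi-geometricity precisely to guarantee that no face $F$ of $\Delta$ has all its vertices inside some $\Delta_S$ with $|S| > d - \dim(F)$, and then that the Koszul-type complex (\ref{eq:complexS}) stays exact after quotienting by $(\Theta)$, yielding (\ref{eq:hilbert}) and hence nonnegativity.

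The main obstacle, and the only place where something genuinely needs checking rather than copying, is the translation of ``locally quasi-geometric'' from the cubical-subdivision setting of Section~\ref{sec:main} to the CW-regular setting: one must verify that the combinatorial condition stated there still forbids exactly the bad configurations of faces of $\Delta$ and vertices relative to the $\Delta_i$ that are needed for the existence of a special l.s.o.p. Since the definition of the link and of the subdivision map $\sigma$ carry over unchanged to regular M-complexes, and since the carrier of an edge of $\Gamma$ is well defined because $\sigma$ satisfies conditions (i) and (ii) of Definition~\ref{def:formal}, this check is routine but should be spelled out. With that in hand, the remainder of the argument—decomposition over vertices, identification of the $\Delta_S$, the complex (\ref{eq:complexS}) and its quotient, and the Hilbert-series computation—is word-for-word the proof of Theorem~\ref{thm:nonnv}, and the conclusion $\ell_P(Q,x) \ge 0$ follows.
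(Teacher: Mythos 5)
Your proposal is correct and coincides with the paper's intended argument: the paper offers no separate proof of this theorem beyond the remark that ``the proof of Theorem~\ref{thm:nonn} works in the following situation,'' and your reduction to the vertex-wise sums $\sum_{S \subseteq [e]} (-1)^{|S|} h(\Delta_S, x)$ followed by the special-l.s.o.p.\ and exact-complex machinery of Section~\ref{sec:nonn} is precisely that proof spelled out. The one imprecision is your claim that each $\Gamma_u$ is a ball: condition (iii$'$) only makes $\Gamma_u$ homeomorphic to the manifold $u$, which may fail to be a ball (cf.\ the annulus of Example~\ref{ex:ann}), but the links $\Delta_S$ of a vertex in such a manifold are still homology balls or spheres and hence Cohen--Macaulay, so the argument is unaffected.
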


  \section*{Acknowledgments} The author wishes to thank Richard Stanley for 
  helpful comments and the anonymous referees for useful suggestions on the 
  presentation.


\begin{thebibliography}{xxx}
  %
  \bibitem{Ad96}
  R.M.~Adin,
  \textit{A new cubical $h$-vector},
  Discrete Math. {\bf~157} (1996), 3--14.
  %
  \bibitem{BBC97}
  E.K.~Babson, L.J.~Billera and C.S.~Chan,
  \textit{Neighborly cubical spheres and a cubical lower bound conjecture},
  Israel J. Math. {\bf~102} (1997), 297--315.
  %
  \bibitem{Bj84}
  A.~Bj\"orner,
  \textit{Posets, regular CW complexes, and Bruhat order},
  European J. Combin. {\bf~5} (1984), 7--16.
  %
  \bibitem{Bj95}
  A.~Bj\"orner,
  \textit{Topological methods}, in \textit{Handbook of Combinatorics}
  (R.L.~Graham, M.~Gr\"otschel and L.~Lov\'asz, eds.),
  North Holland, Amsterdam, 1995, pp.~1819--1872.
  %
  \bibitem{BSZ99}
  A.~Bj\"orner, M.~Las~Vergnas, B.~Sturmfels, N.~White and G.M.~Ziegler,
  Oriented Matroids,
  Encyclopedia of Mathematics and Its Applications {\bf~46}, Cambridge
  University Press, Cambridge, 1993; second printing, 1999.
  %
  \bibitem{EK07}
  R.~Ehrenborg and K.~Karu,
  \textit{Decomposition theorem for the cd-index of Gorenstein* posets},
  J. Algebraic Combin. {\bf~26} (2007), 225--251.
  %
  \bibitem{Gru67}
  B.~Gr\"unbaum,
  Convex Polytopes,
  Interscience, London, 1967; second edition, Graduate Texts in
  Mathematics {\bf~221}, Springer, New York, 2003.
  %
  \bibitem{HN02}
  P.~Hersh and I.~Novik,
  \textit{A short simplicial $h$-vector and the upper bound theorem},
  Discrete Comput. Geom. {\bf~28} (2002), 283--289.
  %
  \bibitem{Het95}
  G.~Hetyei,
  \textit{On the Stanley ring of a cubical complex},
  Discrete Comput. Geom. {\bf~14} (1995), 305--330.
  %
  \bibitem{Het96}
  G.~Hetyei,
  \textit{Invariants des complexes cubiques},
  Ann. Sci. Math. Qu\'ebec {\bf~20} (1996), 35--52.
  %
  \bibitem{Joc93}
  W.~Jockusch,
  \textit{The lower and upper bound problems for cubical polytopes},
  Discrete Comput. Geom. {\bf~9} (1993), 159--163.
  %
  \bibitem{Kle09}
  S.~Klee,
  \textit{Lower bounds for cubical pseudomanifolds},
  Discrete Comput. Geom. (to appear).
  %
  \bibitem{MW71}
  P.~McMullen and D.W.~Walkup,
  \textit{A generalized lower-bound conjecture for simplicial polytopes},
  Mathematika {\bf~18} (1971), 264--273.
  %
  \bibitem{Sav10}
  C.~Savvidou,
  \textit{Face numbers of cubical barycentric subdivisions},
  preprint, 2010, {\tt arXiv:1005.4156}.
  %
  \bibitem{StaEC1}
  R.P.~Stanley,
  Enumerative Combinatorics, vol.~1,
  Wadsworth \& Brooks/Cole, Pacific Grove, CA, 1986;
  second printing,
  Cambridge Studies in Advanced Mathematics {\bf~49},
  Cambridge University Press, Cambridge, 1997.
  %
  \bibitem{Sta87}
  R.P.~Stanley,
  \textit{Generalized $h$-vectors, intersection cohomology of toric
  varieties and related results},
  in \textit{Commutative Algebra and Combinatorics}
  (N.~Nagata and H.~Matsumura, eds.),
  Adv. Stud. Pure Math. {\bf~11}, Kinokuniya, Tokyo and North-Holland,
  Amsterdam-New York, 1987, pp.~187--213.
  %
  \bibitem{Sta92}
  R.P.~Stanley,
  \textit{Subdivisions and local $h$-vectors},
  J. Amer. Math. Soc. {\bf~5} (1992), 805--851.
  %
  \bibitem{Sta93}
  R.P.~Stanley,
  \textit{A monotonicity property of $h$-vectors and $h^*$-vectors},
  European J. Combin. {\bf~14} (1993), 251--258.
  %
  \bibitem{StaCCA}
  R.P.~Stanley,
  Combinatorics and Commutative Algebra,
  second edition, Birkh\"auser, Basel, 1996.
  %
  \bibitem{Swa06}
  E.~Swartz,
  \textit{$g$-elements, finite buildings and higher Cohen-Macaulay
  connectivity},
  J. Combin. Theory Series A {\bf~113} (2006), 1305--1320.
  %
  \bibitem{Zie95}
  G.M.~Ziegler,
  Lectures on Polytopes,
  Graduate Texts in Mathematics {\bf~152},
  Springer-Verlag, New York, 1995.
  %
  \end{thebibliography}
  \end{document}